\documentclass[14pt]{extarticle}

\usepackage[utf8]{inputenc}
\pdfcompresslevel=9 
\usepackage[english]{babel}

\usepackage{amsopn,amsmath,amssymb,amsthm}
\usepackage{graphicx}
\usepackage{subcaption}
\usepackage{mathtools}

\usepackage[pdfencoding=unicode, psdextra, citecolor=blue, urlcolor=blue,
  linkcolor=blue, colorlinks=true, bookmarksopen=true]{hyperref}

\usepackage[style=numeric,backend=biber,autolang=other,sorting=none]{biblatex}
\renewbibmacro{in:}{}
\usepackage[style=russian]{csquotes}

\addbibresource[datatype=bibtex]{convex_trig_eng.bib}

\usepackage{indentfirst}
\usepackage[top=1.5cm,bottom=1.5cm,left=1.5cm,right=1.5cm]{geometry}
\usepackage{comment}

\usepackage{graphicx}
\graphicspath{{pics/}}

\newcommand{\R}{\mathbb{R}}
\newcommand{\Z}{\mathbb{Z}}
\newcommand{\eqdef}{\stackrel{\mathrm{def}}{=}}
\newcommand{\ddt}{\dfrac{d}{dt}}
\newcommand{\const}{\mathrm{const}}
\newcommand{\polar}{\circ}
\newcommand{\SQ}{\mathbb{S}}

\DeclareMathOperator{\Int}{\mathrm{int}}
\DeclareMathOperator{\tg}{tg}

\DeclareMathOperator{\argmax}{\mathrm{arg\,max}}

\theoremstyle{plain}
\newtheorem{thm}{Theorem}
\newtheorem{prop}{Proposition}
\newtheorem{corollary}{Corollary}

\theoremstyle{definition}
\newtheorem{defn}{Definition}
\newtheorem{example}{Example}

\sloppy

\title{Convex trigonometry with applications to sub-Finsler geometry}
\author{L.\,V.~Lokutsievskiy}

\begin{document}

\maketitle

\begin{abstract}
	A new convenient method of describing flat convex compact sets is proposed. It generalizes classical trigonometric functions $\sin$ and $\cos$. Apparently, this method may be very useful for explicit description of solutions of optimal control problems with two-dimensional control. Using this method a series of sub-Finsler problems with two-dimensional control lying in an arbitrary convex set $\Omega$ is investigated. Namely, problems on the Heisenberg, Engel, and Cartan groups and also Grushin's and Martinet's cases are considered. A particular attention is paid to the case when $\Omega$ is a polygon.
\end{abstract}

\footnotetext[0]{This work was supported by the Russian Foundation for Basic Research under grants 17-01-00805 and 17-01-00809.}

\section{Introduction}

Let $\Omega$ be an arbitrary convex compact set in $\R^2$ with the origin in its interior, $0\in\Int\Omega$. If we have an optimal control problem where a 2-dimensional control $u=(u_1,u_2)\subset\R^2$ is restricted to $\Omega$, $u\in\Omega$, then Pontryagin's maximum principle usually states that the optimal control should move along the boundary of $\Omega$. In the case when $\Omega$ is a circle, this motion can be conveniently described in terms of trigonometric functions. However, if $\Omega$ is not a circle, then trigonometric functions are not the best choice. For example if $\Omega$ is a polygon, then explicit construction of optimal solutions involves painstaking considerations of all possible control jumps from one vertex to another. 

An entirely different approach is proposed in the present paper. We introduce new functions $\cos_\Omega$ and $\sin_\Omega$, which usually allow one to conveniently and explicitly describe the dynamics of a point along the boundary $\partial\Omega$ and to avoid cumbersome formulae. The functions  $\cos_\Omega$ and $\sin_\Omega$ coincide with the classical trigonometric functions $\cos$ and $\sin$ in the case when $\Omega$ is the unit circle centered at the origin. For the general case, (i) these functions inherit a lot of convenient properties of the classical functions $\cos$ and $\sin$  and (ii) they can be explicitly calculated  for a variety of particular sets $\Omega$. For example, the functions $\cos_\Omega$ and $\sin_\Omega$ are calculated completely below for the case when $\Omega$ is an arbitrary polygon. A connection with Jacobi elliptic functions is also computed for the case when $\Omega$ is an ellipse. The construction of the functions $\cos_\Omega$ and $\sin_\Omega$ involves directly the polar set $\Omega^\polar$. It appears that the properties of one pair of the functions $\cos$ and $\sin$ (obtained in the case when $\Omega$ is a circle) are inherited by two pairs of the generalized trigonometric functions $\cos_\Omega$, $\sin_\Omega$ and $\cos_{\Omega^\polar}$, $\sin_{\Omega^\polar}$.

We demonstrate the convenience of new functions by describing the dynamics of an optimal control in a series of sub-Finsler problems. Interest to these problems has increased in recent years also because of Gromov's theorem on groups with polynomial growth~\cite{GromovPolynomialGrowth}. Let us formulate these problems in the optimal control language. Consider the following time-optimal problems:

\[
	T\to\min
\]

\noindent assuming that the control $u=(u_1,u_2)$ is 2-dimensional and belongs to $\Omega$, $u\in\Omega$.

\begin{enumerate}
	\item On the Heisenberg group (the Dido problem with sub-Finsler length):
	\[
		\dot x_1=u_1,\quad \dot x_2=u_2,\quad \dot z=\frac12(x_1u_2-x_2u_1);\quad u\in\Omega.
	\]

	\item Grushin's problem:
	\[
		\dot x_1=u_1,\quad \dot x_2=x_1u_2;\quad u\in\Omega.
	\]

	\item Martinet's problem:
	\[
		\dot x_1=u_1,\quad \dot x_2=u_2,\quad\dot w=-\frac12x_2^2u_1;\quad u\in\Omega.
	\]

	\item On the Engel group:
	\begin{gather*}
		\dot x_1=u_1;\quad \dot x_2=u_2;\quad \dot z=\frac12(x_1u_2-x_2u_1);\\
		\dot w=-\frac12x_2^2u_1;\quad u\in\Omega.
	\end{gather*}

	\item On the Cartan group (the generalized Dido problem):
	\begin{gather*}
		\dot x_1=u_1,\quad \dot x_2=u_2,\quad \dot z=\frac12(x_1u_2-x_2u_1),\\
		\dot w_1=\frac12x_1^2u_2,\quad \dot w_2=-\frac12x_2^2u_1,\qquad u\in\Omega.\\
	\end{gather*}
	
\end{enumerate}

Each of the above problems defines a distance $\rho$ on the corresponding space in the standard way. However, $\rho$ is not a metric in general case but it is always a quasi-metric. Indeed (i)  $0\le\rho(P,Q)<\infty$ (in view of controlability), (ii) $\rho(P,Q)=0\Leftrightarrow P=Q$, and (iii) the triangle inequality $\rho(P,R)\le\rho(P,Q)+\rho(Q,R)$ is fulfilled, but there is no symmetry $\rho(P,Q)\ne\rho(Q,P)$ in general. Nevertheless $-\Omega\subset C\Omega$ for a constant $C\ge 1$ (since $\Omega$ is compact and $0\in\Int\Omega$), and consequently $\rho(P,Q)\le C\rho(Q,P)$. Thus we may call $\rho$ a quasi-metric. If in addition the set $\Omega$ is symmetric, $\Omega=-\Omega$, then $\rho$ becomes an actual metric. Let us note that any intrinsic left-invariant metric on a Lie group is a sub-Finser metric with a symmetric set of unit velocities $\Omega=-\Omega$ (see \cite[Theorem 2]{Berestovskii1988}).

The problem 1 was first considered by Busemann in 1947 in~\cite{Busemann} where closed geodesics were found and isoperemetric inequality was proved by Brunn-Minkowski's theory (and without using the Pontryagin maximum principle) for the set $\Omega$ being both convex and non-convex. Geodesics in problem 1 were completely found in~\cite{BerestovskiiHeisenberg} (with the help of the Pontryagin maximum principle) and optimal synthesis was constructed for the case when the set $\Omega$ is strictly convex. Also this problem was discussed in~\cite[\S 7]{Dmitruk}. Some of the above problems were considered in~\cite{BarilariBoscain} in the particular case when $\Omega$ is a square.

The dynamics of the optimal control in the problems 3, 4, and 5 is dictated by the following generalized pendulum equation
\[
	\ddot \theta^\polar = \sin_\Omega\theta,
\]

\noindent where $\theta$ and $\theta^\polar$ are generalized angles of the set $\Omega$ and $\Omega^\polar$ and they correspond to each other (see the next section for detailed definition). The structure of this equation's solutions is considered in details in the last Section~\ref{sec:pendulum} of the present paper.

The present paper is devoted to explicit integration of motion equations, so the author will not touch upon the questions of optimality in almost all cases.

\section{Convex trigonometry}

Let $\Omega\subset\R^2$ be a convex compact set and $0\in\Int\Omega$. The following definition of the functions $\cos_\Omega$ and $\sin_\Omega$ at first glance may cause a natural question ``why so?''. Nonetheless exactly this particular definition appears to be very convenient in solving all the above sub-Finsler problems, since it is agreed with affine transformations\footnote{In the same way one can define hyperbolic functions (see \cite{Shervatov}).}.

Denote by $\SQ(\Omega)$ the area of the set $\Omega$.

\begin{figure}[ht]
	\centering
	\includegraphics[width=0.5\textwidth]{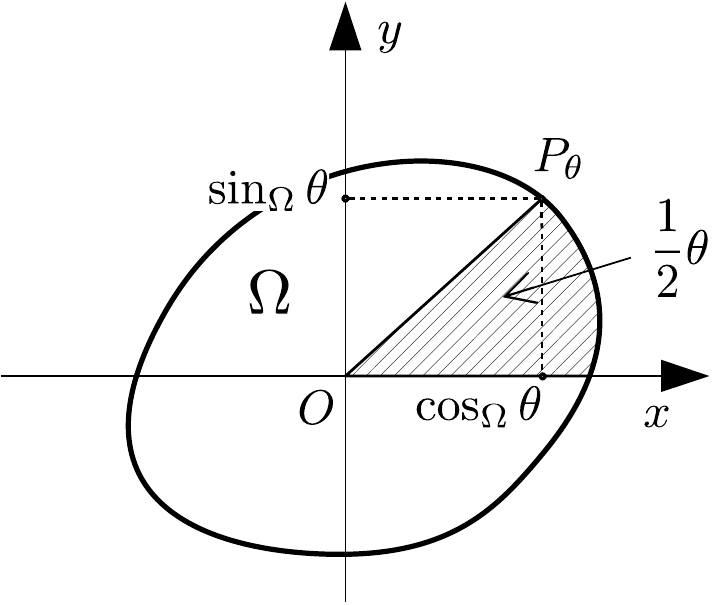}
	\caption{Definition of the generalized trigonometric functions $\cos_\Omega\theta$ and $\sin_\Omega\theta$ by the set $\Omega$.}
	\label{fig:cos_sin_def}
\end{figure}

\begin{defn}
	Let $\theta\in\R$ denote a generalized angle. If $0\le\theta<2\SQ(\Omega)$, then we choose a point $P_\theta$ on the boundary of $\Omega$ such that the area of the sector of $\Omega$ between the rays $Ox$ and $OP_\theta$ is $\frac12\theta$ (see Fig.~\ref{fig:cos_sin_def}). By definition $\cos_\Omega\theta$ and $\sin_\Omega\theta$ are the coordinates of~$P_\theta$. If the generalized angle $\theta$ does not belong to the interval $\big[0;2\SQ(\Omega)\big)$, then we define the functions $\cos_\Omega$ and $\sin_\Omega$ as periodic with period $2\SQ(\Omega)$; i.e., for $k\in\Z$ such that $\theta + 2\SQ(\Omega)k \in \big[0;2\SQ(\Omega)\big)$ we put
	\[
	\cos_\Omega\theta = \cos_\Omega(\theta + 2\SQ(\Omega)k);\qquad
	\sin_\Omega\theta = \sin_\Omega(\theta + 2\SQ(\Omega)k);\qquad
	P_\theta = P_{\theta+2\SQ(\Omega)k}.
	\]
\end{defn}

Note that all the properties of $\sin_\Omega$ and $\cos_\Omega$ listed below can be easily proved once the appropriate definition is given. The purpose of the present paper is to show the convenience of the new machinery in solving a series of optimal control problems with two-dimensional control. 

Obviously, $\sin_\Omega0=0$. If $\Omega$ is the unit circle centered at the origin, then the above definition produces the classical trigonometric functions. If $\Omega$ differs from the unit circle, then the functions $\cos_\Omega$ and $\sin_\Omega$, of course, differ from the classical functions $\cos$ and $\sin$. Nonetheless they inherit a lot of properties from the classical case.

We will use the polar set $\Omega^\polar$ together with the set $\Omega$:
\[
	\Omega^\polar = \{(p,q)\in\R^{2*}:px+qy\le 1\mbox{ for all }(x,y)\in\Omega\}\subset\R^2.
\]

\noindent The polar set $\Omega^\polar$ is (always) a convex and compact (as $0\in\Int\Omega$) set and $0\in\Int\Omega^\polar$ (as $\Omega$ is bounded). To avoid confusion we will assume that the set $\Omega$ lies in the plane with coordinates $(x,y)$ and that the polar set $\Omega^\polar$ lies in the plane with coordinates $(p,q)$.

Note that $\Omega^{\circ\circ}=\Omega$ by the bipolar theorem (see \cite[Theorem 14.5]{Rockafellar}). We can apply the above definition of the generalized trigonometric functions to the polar set $\Omega^\polar$ and an arbitrary angle $\psi\in\R$ to construct $\cos_{\Omega^\polar}\psi$ and $\sin_{\Omega^\polar}\psi$, which are the coordinates of the appropriate point $Q_\psi\in\partial\Omega^\polar$. From the definition of the polar set it follows that
\[
	\cos_\Omega\theta\cos_{\Omega^\polar}\psi + 
	\sin_\Omega\theta\sin_{\Omega^\polar}\psi \le 1.
\]

\begin{figure}[ht]
	\centering
	\includegraphics[width=0.5\textwidth]{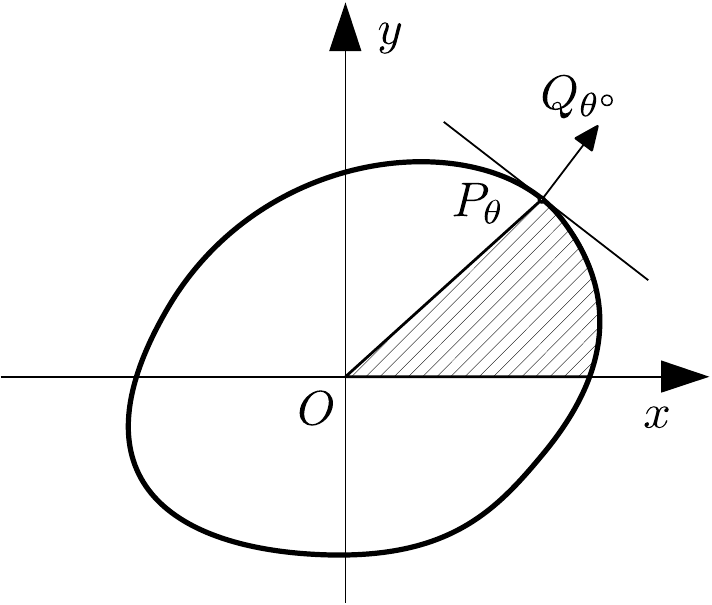}
	\caption{Correspondence $\theta\leftrightarrow\theta^\polar$.}
	\label{fig:thetha_to_theta_polar_pic}
\end{figure}

\begin{defn}
	We say that angles $\theta\in\R$ and $\theta^\polar\in\R$ correspond to each other and write $\theta\xleftrightarrow{\Omega}\theta^\polar$ if the supporting half-plane of $\Omega$ at $P_\theta$ is determined by the (co)vector~$Q_{\theta^\polar}$ (see fig. \ref{fig:thetha_to_theta_polar_pic}). When no confusing ensues we omit the symbol $\Omega$ over the arrow and write~$\theta\leftrightarrow\theta^\polar$.
\end{defn}

\begin{thm}
\label{thm:main_trig_thm}
	The definition of the correspondence of $\theta$ and $\theta^\polar$ is symmetric, i.e., $\theta\xleftrightarrow{\Omega}\theta^\polar$ is equivalent to $\theta^\polar\xleftrightarrow{\Omega^\polar}\theta$. Moreover, an analogue of the main Pythagorean identity holds:
	\begin{equation}
	\label{eq:main_trig}
		\theta\leftrightarrow\theta^\polar
		\quad\Longleftrightarrow\quad
		\cos_\Omega\theta\cos_{\Omega^\polar}\theta^\polar + 
		\sin_\Omega\theta\sin_{\Omega^\polar}\theta^\polar = 1.
	\end{equation}
\end{thm}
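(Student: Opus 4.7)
The strategy is to observe that the Pythagorean identity in \eqref{eq:main_trig} is nothing more than the evaluation of the bilinear pairing at $(P_\theta, Q_{\theta^\polar})$, and to deduce both the symmetry and the equivalence from a single reading of what it means for $Q_{\theta^\polar}$ to lie on $\partial\Omega^\polar$.

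First I would unpack the polar structure. By the very definition of $\Omega^\polar$, for any $(p,q)\in\Omega^\polar$ and any $(x,y)\in\Omega$ we have $px+qy\le 1$; and $(p,q)\in\partial\Omega^\polar$ exactly when this bound is sharp, i.e.\ when there exists some $(x,y)\in\Omega$ with $px+qy=1$. Consequently, for our $Q_{\theta^\polar}=(\cos_{\Omega^\polar}\theta^\polar,\sin_{\Omega^\polar}\theta^\polar)\in\partial\Omega^\polar$, the line $\{(x,y): Q_{\theta^\polar}\cdot(x,y)=1\}$ is a supporting line of $\Omega$ cutting out the half-plane $\{Q_{\theta^\polar}\cdot(x,y)\le 1\}\supset\Omega$, and its contact set with $\Omega$ is exactly $\{(x,y)\in\Omega:Q_{\theta^\polar}\cdot(x,y)=1\}$.

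Next I would derive the Pythagorean identity from this. The definition of $\theta\leftrightarrow\theta^\polar$ says that the half-plane just described is a supporting half-plane of $\Omega$ at $P_\theta$; equivalently, $P_\theta$ is a contact point of the associated supporting line. By the previous paragraph this is equivalent to $Q_{\theta^\polar}\cdot P_\theta=1$, which is exactly \eqref{eq:main_trig}. Conversely, since always $Q_{\theta^\polar}\cdot P_\theta\le 1$ (because $P_\theta\in\Omega$ and $Q_{\theta^\polar}\in\Omega^\polar$), equality forces $P_\theta$ to realize the maximum of $Q_{\theta^\polar}\cdot(\cdot)$ on $\Omega$, so $P_\theta$ does lie on the supporting line determined by $Q_{\theta^\polar}$. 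Finally the symmetry comes for free via the bipolar theorem $(\Omega^\polar)^\polar=\Omega$: running the same argument with the roles of $\Omega$ and $\Omega^\polar$ swapped shows that $\theta^\polar\xleftrightarrow{\Omega^\polar}\theta$ is also equivalent to $P_\theta\cdot Q_{\theta^\polar}=1$, i.e.\ to the same identity, hence both correspondences coincide.

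I do not expect a substantial obstacle, because the content is entirely in unwinding the definitions plus the characterization $(p,q)\in\partial\Omega^\polar \Leftrightarrow \sup_{(x,y)\in\Omega}(px+qy)=1$. The only point that requires a little care is that if $P_\theta$ sits at a vertex or along a flat edge of $\partial\Omega$, the supporting half-plane at $P_\theta$ is not unique; but the definition of the correspondence only demands that the half-plane determined by $Q_{\theta^\polar}$ be \emph{one} of the supporting half-planes at $P_\theta$, so this set-valued character raises no issue, and the same remark handles the symmetric situation where $Q_{\theta^\polar}$ is a non-smooth point of $\partial\Omega^\polar$.
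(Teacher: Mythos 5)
Your proof is correct and follows essentially the same route as the paper: you reduce the correspondence $\theta\leftrightarrow\theta^\polar$ to the equality case $\langle Q_{\theta^\polar},P_\theta\rangle=1$ of the polar inequality, using that $Q_{\theta^\polar}\in\partial\Omega^\polar$ means $s_\Omega(Q_{\theta^\polar})=1$, and then obtain symmetry from the bipolar theorem. The only difference is cosmetic — the paper phrases the contact-set characterization via the subdifferential $\partial s_\Omega(p,q)=\argmax_{(x,y)\in\Omega}(px+qy)$, while you unwind the same fact directly from the definition of the polar set.
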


\begin{proof}
	Let us compute the generalized trigonometric functions in terms of the support function of the set $\Omega$:
	\[
		s_\Omega(p,q) = \sup_{(x,y)\in\Omega}(px+qy).
	\]

	\noindent The polar set can be written as $\Omega^\polar=\{(p,q):s_\Omega(p,q)\le 1\}$. The subdifferential \cite[\S\S 23,24]{Rockafellar} of the function $s_\Omega$ is
	\[
		\partial s_\Omega(p,q) = \argmax_{(x,y)\in\Omega}(px+qy).
	\]

	Thus the point $(x,y)\in\partial\Omega$ belongs to the subdifferential $\partial s_\Omega(p,q)$ if and only if the covector $(p,q)$ determines a supporting half-plane of the set $\Omega$ at the point $(x,y)$. If in addition $(p,q)\in\partial\Omega^\polar$, then
	\[
		(x,y)\in\partial s_{\Omega}(p,q)
		\quad\Leftrightarrow\quad
		1=s_\Omega(p,q)=px+qy.
	\]

	So $\theta\xleftrightarrow{\Omega}\theta^\polar$ if and only if the previous equality holds for $P_\theta=(x,y)$ and $Q_{\theta^\polar}=(p,q)$. Recall that
	\[
		x=\cos_\Omega\theta,\quad y=\sin_\Omega\theta,\quad
		p=\cos_{\Omega^\polar}\theta^\polar,\quad q=\sin_{\Omega^\polar}\theta^\polar.
	\]
	
	\noindent Consequently $\theta\xleftrightarrow{\Omega}\theta^\polar$ if and only if the equality in~\eqref{eq:main_trig} holds.
	
	Using the bipolar theorem it can be proved in the same way that the equality in~\eqref{eq:main_trig} is equivalent to the inverse correspondence $\theta^\polar\xleftrightarrow{\Omega^\polar}\theta$. Thus the correspondence $\leftrightarrow$ is symmetric.
\end{proof}

The correspondence $\theta\leftrightarrow\theta^\polar$ is not one-to-one in general. If the boundary of $\Omega$ has a corner at a point $P_\theta$, then the angle $\theta$ corresponds to the whole edge in $\Omega^\polar$ and vice versa, i.e., to any angle $\theta$ with $P_\theta$ on the same edge of $\Omega$ there corresponds one particular angle $\theta^\polar$ (up to $2\SQ(\Omega^\polar)\Z$), and the boundary of $\Omega^\polar$ has a corner at the point $Q_{\theta^\polar}$. Nonetheless it is natural to define a monotonic (multivalued and closed) function $\theta^\polar(\theta)$ that maps an angle $\theta$ to a maximal closed interval\footnote{Obviously, for any $\theta$ there exists an angle $\theta^\polar$ such that $\theta^\polar\leftrightarrow\theta$. This can be easily proved by the hyperplane separation theorem.} of angles $\theta^\polar$ such that $\theta^\polar\leftrightarrow\theta$. This function is quasiperiodic, i.e.,
\[
	\theta^\polar(\theta+2\SQ(\Omega)k) = \theta^\polar(\theta) + 2\SQ(\Omega^\polar)k
	\quad\mbox{with}\quad k\in\Z.
\]

If $\Omega$ is strictly convex, then the function $\theta^\polar(\theta)$ is strictly monotonic. If the boundary of $\Omega$ is $C^1$-smooth, then the function $\theta^\polar(\theta)$ is continuous.

\begin{thm}
\label{thm:derivatives}
	The functions $\cos_\Omega$ and $\sin_\Omega$ are Lipschitz continuous and have the left and right derivatives for all $\theta$, which coinside for a.e.\ $\theta$. Let us denote for short the whole interval between the left and right derivatives by the usual derivative stroke sign. Then for any $\theta\leftrightarrow\theta^\polar$, we have
	\[
		\cos'_\Omega\theta \ni -\sin_{\Omega^\polar}\theta^\polar
		\quad\mbox{ and }\quad
		\sin'_\Omega\theta \ni \cos_{\Omega^\polar}\theta^\polar.
	\]

	\noindent Moreover, for any $\theta$
	\begin{gather*}
		\cos'_\Omega\theta = \{-\sin_{\Omega^\polar}\theta^\polar\quad\mbox{for all}\quad \theta^\polar\leftrightarrow\theta\},\\
		\sin'_\Omega\theta = \{\cos_{\Omega^\polar}\theta^\polar\quad\mbox{for all}\quad \theta^\polar\leftrightarrow\theta\}.
	\end{gather*}

	The similar formulae hold for $\cos_{\Omega^\polar}'\theta^\polar$ and $\sin_{\Omega^\polar}'\theta^\polar$
\end{thm}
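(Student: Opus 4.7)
The plan is to push everything through an arc-length parametrisation of $\partial\Omega$ and convert the resulting tangent vector into the language of $\cos_{\Omega^\polar}, \sin_{\Omega^\polar}$ via Theorem~\ref{thm:main_trig_thm}. Fix a counter-clockwise, $1$-Lipschitz parametrisation $s\mapsto P(s)$ of $\partial\Omega$. Because the swept area satisfies $A'(s)=\tfrac12 P(s)\times\tau(s)=\tfrac12 h(s)$ a.e., where $\tau=P'$ is the unit tangent and $h(s)$ is the distance from $O$ to the supporting line of $\Omega$ at $P(s)$, and because $0\in\Int\Omega$ forces $h\ge h_{\min}>0$, the inverse map $s(\theta)$ is Lipschitz with constant $1/h_{\min}$. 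Composing with the $1$-Lipschitz map $P(\cdot)$ then yields the claimed Lipschitz continuity of $P_\theta=(\cos_\Omega\theta,\sin_\Omega\theta)$.

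For the derivative formula I would first treat the generic case when $P_\theta$ is a smooth point of $\partial\Omega$ (including interior points of flat edges). There the outward unit normal $n$ and the distance $h$ are uniquely defined, and Theorem~\ref{thm:main_trig_thm} identifies $Q_{\theta^\polar}=n/h$, since $\langle n/h, P_\theta\rangle=1$. Because the CCW unit tangent is the $\pi/2$-rotation $\tau=(-n_2,n_1)$ of the outward normal, the chain rule yields
\[
    \frac{dP_\theta}{d\theta}\;=\;\frac{\tau}{h}\;=\;\bigl(-n_2/h,\,n_1/h\bigr)\;=\;\bigl(-\sin_{\Omega^\polar}\theta^\polar,\,\cos_{\Omega^\polar}\theta^\polar\bigr),
\]
which is the required identity for this (unique) $\theta^\polar\leftrightarrow\theta$.

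The main obstacle is the behaviour at the vertices of $\Omega$, where $P(s)$ has a corner and $\theta^\polar(\theta)=[\theta^\polar_-,\theta^\polar_+]$ is a non-degenerate interval. The incoming and outgoing edges carry outward unit normals $n_\pm$ and tangent-line distances $h_\pm$, and the normalised covectors $Q_{\theta^\polar_\pm}=n_\pm/h_\pm$ are precisely the endpoints of the corresponding dual edge of $\Omega^\polar$. Applying the chain rule one-sidedly gives
\[
    \frac{d^\pm P_\theta}{d\theta}\;=\;\frac{\tau_\pm}{h_\pm}\;=\;\bigl(-\sin_{\Omega^\polar}\theta^\polar_\pm,\,\cos_{\Omega^\polar}\theta^\polar_\pm\bigr).
\]
Since $\theta^\polar$ is itself an area parameter on $\Omega^\polar$, as $\theta^\polar$ traverses $[\theta^\polar_-,\theta^\polar_+]$ the covector $Q_{\theta^\polar}$ moves uniformly along the dual (straight) edge, so both $-\sin_{\Omega^\polar}\theta^\polar$ and $\cos_{\Omega^\polar}\theta^\polar$ depend affinely and monotonically on $\theta^\polar$ there. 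Hence the image set $\{-\sin_{\Omega^\polar}\theta^\polar:\theta^\polar\leftrightarrow\theta\}$ coincides exactly with the closed interval bounded by the left and right derivatives of $\cos_\Omega$, and likewise for $\sin_\Omega$.

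Finally, a planar convex compact set has at most countably many vertices, so the set of $\theta$ at which the left and right derivatives disagree has measure zero, giving the asserted a.e.\ coincidence. The companion formulae for $\cos'_{\Omega^\polar}$ and $\sin'_{\Omega^\polar}$ follow by applying the same argument to $\Omega^\polar$ in place of $\Omega$, which is legitimate thanks to the symmetric part of Theorem~\ref{thm:main_trig_thm}.
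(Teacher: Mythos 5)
Your argument is correct, and at its core it runs on the same engine as the paper's proof: the identification, via Theorem~\ref{thm:main_trig_thm}, of the supporting covectors at $P_\theta$ with points $Q_{\theta^\polar}\in\partial\Omega^\polar$, combined with a computation of the rate at which the swept area grows along the one-sided tangent directions. The packaging differs, though. The paper proves Lipschitz continuity by parametrizing $\partial\Omega$ by the classical polar angle (through the bi-Lipschitz map $\pi_\Omega$) and obtains the one-sided derivatives by a first-order expansion along the tangent ray $Y_t$, showing $\theta_t=\theta+t+o(t)$ from the Pythagorean identity; you instead use the arc-length parametrization, the relation $d\theta/ds=\langle P,n\rangle=h\ge h_{\min}>0$, the identification $Q_{\theta^\polar}=n/h$, and a one-sided chain rule at corners --- which is the same identity in different clothes, since $P\times(\tau/h)=\langle P,n/h\rangle=\langle P,Q_{\theta^\polar}\rangle=1$. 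Your route has the small advantage of making explicit a step the paper leaves implicit: along the dual edge of $\Omega^\polar$ the coordinates of $Q_{\theta^\polar}$ are affine in $\theta^\polar$, so the value set $\{-\sin_{\Omega^\polar}\theta^\polar:\theta^\polar\leftrightarrow\theta\}$ genuinely fills the whole interval between the one-sided derivatives, which is the ``moreover'' part of the statement. Two cosmetic remarks: to pass from the a.e.\ bound $d\theta/ds\ge h_{\min}$ to the Lipschitz estimate for $s(\theta)$ you should note that $\theta(s)$ is Lipschitz, hence absolutely continuous, so the pointwise bound can be integrated; and what licenses repeating the argument with $\Omega^\polar$ in place of $\Omega$ is really the bipolar theorem $\Omega^{\polar\polar}=\Omega$ (as the paper says), the symmetry of the correspondence from Theorem~\ref{thm:main_trig_thm} then guaranteeing the formulas read as stated --- a one-line fix either way.
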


\begin{proof}
	Denote by $\pi_\Omega(\theta)$ the angle\footnote{This angle, of course, is defined up to the term $2\pi k$. We will naturally assume that $\pi_\Omega(0)=0$ and the function $\pi_\Omega$ is continuous. Thus, for example, $\pi_\Omega(2\SQ(\Omega)k)=2\pi k$ for $k\in\Z$.} between $Ox$ and $OP_\theta$, i.e.
	\begin{equation}
	\label{eq:pi_omega_theta}
		(\cos_\Omega\theta,\sin_\Omega\theta)\upuparrows(\cos\pi_\Omega(\theta),\sin\pi_\Omega(\theta)).
	\end{equation}

	\noindent The function $\pi_\Omega$ is monotone increasing and is bi-Lipschitz continuous with constants determined by the minimal and maximal distance from the origin to the boundary of $\Omega$. Let us choose the parametrization of the boundary $\partial\Omega$ by angle $\phi$,
	\[
		x(\phi) = r(\phi)\cos\phi,\quad y(\phi)=r(\phi)\sin\phi,
	\]

	\noindent where $r(\phi)$ is chosen such that $(x(\phi),y(\phi))\in\partial\Omega$, i.e. $r(\phi)=1/s_{\Omega^\polar}(\cos\phi,\sin\phi)$. The boundary $\partial\Omega$ becomes a Lipschitz curve in this parametrization. So the coordinates of points on $\partial\Omega$ are Lipschitz continuous functions of the parameter $\phi$ and consequently of the parameter $\theta$. Any Lipschitz continuous function is a.e.\ differentiable, and its one-side derivatives coinside at any function differentiability point.

	\begin{figure}[ht]
		\centering
		\includegraphics[width=0.45\textwidth]{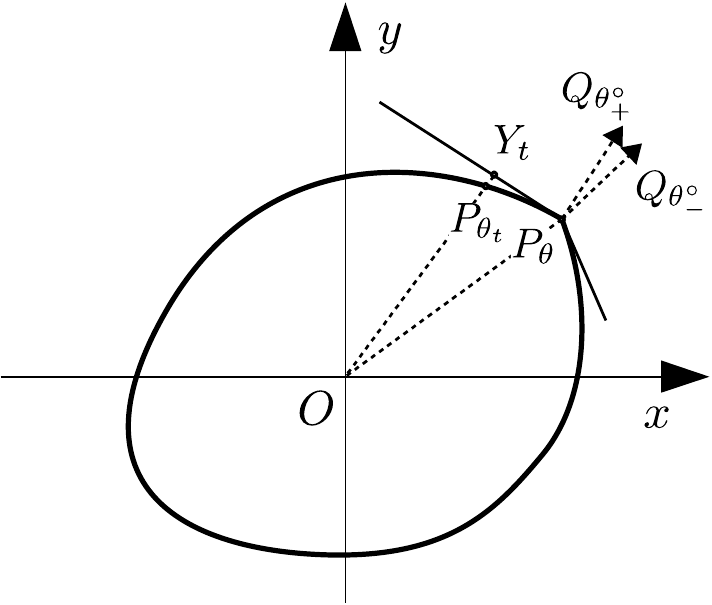}
		\caption{The points $P_\theta$, $Y_t$, $P_{\theta_t}$ and the tangent rays to $\partial\Omega$ determined by $\theta^\polar_\pm$.}
		
		\label{fig:diff_cos_sin}
	\end{figure}
	
	Now we want to compute the derivatives of the generalized trigonometric functions. Consider all supporting half-planes of $\Omega$ at $P_\theta$. They are determined by the covectors $(\cos_{\Omega^\polar}\theta^\polar,\sin_{\Omega^\polar}\theta^\polar)$ for all $\theta^\polar\leftrightarrow\theta$. The set of all angles $\theta^\polar$ corresponding to the given angle $\theta$ is a countable union of intervals (or points) of the form $[\theta^\polar_-+2\SQ(\Omega^\polar)k;\theta^\polar_++2\SQ(\Omega^\polar)k]$, $k\in\Z$. Thus one-side tangent rays to $\partial\Omega$ at $P_\theta$ have directions $(\mp\sin_{\Omega^\polar}\theta^\polar_{\pm},\pm\cos_{\Omega^\polar}\theta^\polar_\pm)$. Moreover, the angle $\theta^\polar_-$ corresponds to the clockwise tangent ray, and the angle $\theta^\polar_+$ corresponds to the counter-clockwise tangent ray (see Fig.~\ref{fig:diff_cos_sin}).

	Now let us compute the right derivatives of $\cos_\Omega$ and $\sin_\Omega$ (the left derivatives can be computed in a similar way). Denote by $Y_t$ for $t\ge 0$ the point on the tangent ray in $P_\theta$ corresponding to the angle $\theta^\polar_+$,
	\[
		Y_t = (\cos_\Omega\theta,\sin_\Omega\theta) + t(-\sin_{\Omega^\polar}\theta^\polar_+,\cos_{\Omega^\polar}\theta^\polar_+).
	\]

	The ray $OY_t$ crosses the boundary $\partial\Omega$ at the point $P_{\theta_t}$, which is determined by the angle $\theta_t$ (see Fig.~\ref{fig:diff_cos_sin}). Let us compute the coordinates of $P_{\theta_t}$ and the angle $\theta_t$ up to $o(t)$ terms. The point $Y_t$ moves along the ray, which is one-side tangent to $\partial\Omega$. So the coordinates of $P_{\theta_t}$ have the form
	\[
		(\cos_\Omega\theta_t,\sin_\Omega\theta_t) = P_{\theta_t} = Y_t + o(t) =
		P_\theta + t(-\sin_{\Omega^\polar}\theta^\polar_+,\cos_{\Omega^\polar}\theta^\polar_+) + o(t).
	\]
	
	For the same reasons, the generalized angle $\theta_t$ (which is the doubled area of the corresponding sector of $\Omega$) is equal up to the term $o(t)$ to the sum of $\theta$ and the doubled area of the triangle $OP_\theta Y_t$, i.e.,
	\[
		\theta_t = \theta + t [OP_\theta\times OY_t] + o(t) 
		=\theta + t(\cos_\Omega\theta\cos_{\Omega^\polar}\theta^\polar_+ + \sin_\Omega\theta\sin_{\Omega^\polar}\theta^\polar_+) +o(t)=\theta + t + o(t).
	\]

	\noindent (here we use Theorem~\ref{thm:main_trig_thm}).

	Hence $\ddt\theta_t=1$ for $t=+0$. So there exist right derivatives of functions $\cos_\Omega$ and $\sin_\Omega$ at $\theta$, and they have the following form
	\[
		\cos_\Omega'(\theta+0) = -\sin_{\Omega^\polar}\theta^\polar_+
		\quad\mbox{and}\quad
		\sin_\Omega'(\theta+0) = \cos_{\Omega^\polar}\theta^\polar_+.
	\]

	It remains to say that the derivatives of the functions $\cos_{\Omega^\polar}$ and $\sin_{\Omega^\polar}$ are computed automatically as $\Omega^{\polar\polar}=\Omega$ by the bipolar theorem.
\end{proof}

Let us note that any Lipschitz continuous function is a.e. differentiable. So if no confuse ensues we will write for short $\cos_\Omega'\theta=-\sin_{\Omega^\polar}\theta^\polar$ and $\sin_\Omega'\theta=\cos_{\Omega^\polar}\theta^\polar$ always meaning the result obtained in Theorem~\ref{thm:derivatives}.

It is easy to see that both the functions $\cos_\Omega$ and $\sin_\Omega$ have one interval of increasing and one interval of decreasing during their period. These two intervals can be separated by at most two intervals of constancy, which appear if $\Omega$ has edges parallel to the axes. Intervals of convexity and concavity can be also determined by the formulae of differentiation.

\begin{corollary}
	Each of the functions $\cos_\Omega$ and $\sin_\Omega$ is concave on any interval with non-positive values and is convex on any interval with non-negative values.
\end{corollary}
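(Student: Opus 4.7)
The plan is to apply Theorem~\ref{thm:derivatives} twice---once to $\Omega$ and once to $\Omega^\polar$---and to chain the resulting first-derivative formulas to obtain the second derivative of $\sin_\Omega$ (and, symmetrically, of $\cos_\Omega$). The corollary will then follow by inspecting the sign of $\sin''_\Omega$ (resp.\ $\cos''_\Omega$) on the intervals in question.

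Concretely, Theorem~\ref{thm:derivatives} gives $\sin'_\Omega(\theta) = \cos_{\Omega^\polar}(\theta^\polar)$. Re-applying the same theorem to the polar set $\Omega^\polar$---legitimate by the bipolar identity $\Omega^{\polar\polar}=\Omega$ together with the symmetry of the correspondence established in Theorem~\ref{thm:main_trig_thm}---yields $\cos'_{\Omega^\polar}(\theta^\polar) = -\sin_\Omega(\theta)$ whenever $\theta\leftrightarrow\theta^\polar$. Composing along the (Lipschitz, monotone, a.e.\ single-valued) map $\theta\mapsto\theta^\polar(\theta)$ and applying the chain rule a.e.\ gives
\[
\sin''_\Omega(\theta) \;=\; -\sin_\Omega(\theta)\,\frac{d\theta^\polar}{d\theta},\qquad
\cos''_\Omega(\theta) \;=\; -\cos_\Omega(\theta)\,\frac{d\theta^\polar}{d\theta}.
\]
The key second ingredient is that $\frac{d\theta^\polar}{d\theta}\ge 0$ a.e.: indeed, $\theta^\polar(\theta)$ is monotone non-decreasing, as noted in the paragraph preceding Theorem~\ref{thm:derivatives}, since counter-clockwise motion of $P_\theta$ along $\partial\Omega$ forces counter-clockwise motion of its supporting covector $Q_{\theta^\polar}$ along $\partial\Omega^\polar$. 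Combining the two facts pins down the sign of the second derivative on every subinterval where $\sin_\Omega$ (resp.\ $\cos_\Omega$) keeps a constant sign, and the standard second-derivative characterization for Lipschitz functions then delivers the convexity/concavity dichotomy claimed in the corollary.

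The main obstacle is the non-smoothness of the objects involved. At corners of $\Omega$ the correspondence $\theta^\polar(\theta)$ is constant on the dual edge of $\Omega^\polar$, so both second derivatives vanish on such intervals and the functions are affine there (hence simultaneously convex and concave). Conversely, at edges of $\Omega$ the functions $\sin_\Omega$ and $\cos_\Omega$ are themselves affine on the corresponding $\theta$-interval, so the claim is again trivial. After isolating these countably many exceptional affine pieces, the a.e.\ sign argument applies on the remaining open subintervals, and the usual Lipschitz-monotonicity bridge between second-derivative sign and convexity closes the proof for both $\sin_\Omega$ and $\cos_\Omega$ simultaneously.
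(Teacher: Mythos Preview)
Your overall strategy coincides with the paper's: apply Theorem~\ref{thm:derivatives} once to get the first derivative, then invoke it again on $\Omega^\polar$ (via bipolarity and the symmetry of~$\leftrightarrow$) together with the monotonicity of $\theta\mapsto\theta^\polar$. The paper, however, phrases the concluding step differently. It observes that $\cos_\Omega'\theta=-\sin_{\Omega^\polar}\theta^\polar(\theta)$ is the composition of the monotone map $\theta\mapsto\theta^\polar$ with a function $\sin_{\Omega^\polar}$ that is monotone on each arc where $\cos_\Omega$ keeps sign; hence the first derivative itself is monotone on each such $\theta$-interval, and convexity/concavity follows. This ``monotone first derivative'' formulation is robust to the non-smoothness.

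Your formulation via the a.e.\ sign of the second derivative has a gap at corners of $\partial\Omega$. There $\theta^\polar(\theta)$ has a \emph{jump}, so $d\theta^\polar/d\theta$ carries an atom and the first derivative of $\cos_\Omega$ (resp.\ $\sin_\Omega$) may itself jump; for a merely Lipschitz $f$ the condition ``$f''\ge 0$ a.e.'' does \emph{not} imply convexity (consider $f(x)=-|x|$). Your last paragraph tries to handle the exceptional sets but swaps the two dual pictures: it is on \emph{edges} of $\Omega$ (equivalently, corners of $\Omega^\polar$) that $\theta\mapsto\theta^\polar$ is locally constant and $\cos_\Omega,\sin_\Omega$ are affine in $\theta$; at a \emph{corner} of $\Omega$ there is a single value of $\theta$, the map $\theta\mapsto\theta^\polar$ jumps, and nothing is affine there. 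The clean fix is precisely the paper's phrasing: argue directly that the one-sided first derivative is a monotone function of $\theta^\polar$ composed with the monotone (possibly jumping) map $\theta\mapsto\theta^\polar$, which absorbs the jumps automatically.
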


\begin{proof}
	Recall that the function $\theta^\polar(\theta)$ is monotone increasing. Since $\cos_\Omega'\theta=-\sin_{\Omega^\polar}\theta^\polar$, the function $\cos_\Omega\theta$ is convex (concave) on any interval on which the function $\sin_{\Omega^\polar}\theta^\polar$ is decreasing (increasing). These intervals are determined by the points on $\partial\Omega^\polar$ with horizontal supporting half-plane, i.e., $\cos_\Omega\theta=0$, the result required. Intervals of convexity and concavity of $\sin_\Omega\theta$ are constructed in a similar way.
\end{proof}

The more symmetries $\Omega$ has, the more symmetric $\cos_\Omega$ and $\sin_\Omega$ become. For example, the following result holds

\begin{prop}
	If $\Omega=-\Omega$, then
	\[
		\cos_\Omega(\theta+\SQ(\Omega)) = -\cos_\Omega\theta
		\quad\mbox{and}\quad
		\sin_\Omega(\theta+\SQ(\Omega)) = -\sin_\Omega\theta.
	\]
\end{prop}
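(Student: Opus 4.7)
The plan is to show the single geometric identity $P_{\theta+\SQ(\Omega)} = -P_{\theta}$; both trigonometric identities follow at once, because the coordinates of $-P_\theta$ are the negatives of those of $P_\theta$.

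First I would use the hypothesis $\Omega = -\Omega$ to note that the antipodal map $P\mapsto -P$ is an area-preserving symmetry of $\Omega$ fixing the origin. In particular, $P_\theta \in \partial\Omega$ forces $-P_\theta \in \partial\Omega$, and since $0\in\Int\Omega$ lies on the segment $[P_\theta,-P_\theta]$, this segment is a chord through the origin that separates $\Omega$ into two closed regions $R^{+}$ and $R^{-}$ with disjoint interiors. The antipodal map interchanges $R^{+}$ and $R^{-}$, so
\[
\SQ(R^{+}) = \SQ(R^{-}) = \tfrac12\SQ(\Omega).
\]

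Second, I would identify (say) $R^{+}$ with the sector of $\Omega$ bounded by the rays $OP_\theta$ and $O(-P_\theta)$ and by the counter-clockwise arc of $\partial\Omega$ running from $P_\theta$ to $-P_\theta$. Letting $\theta' \in [\theta,\theta+2\SQ(\Omega))$ denote the generalized angle determined by $P_{\theta'}=-P_\theta$, the definition of the generalized angle gives $\theta'-\theta = 2\SQ(R^{+}) = \SQ(\Omega)$. Hence $P_{\theta+\SQ(\Omega)} = -P_\theta$, which yields the two stated identities for $\theta$ in $[0,\SQ(\Omega))$; the general case follows from the $2\SQ(\Omega)$-periodicity built into the definition.

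I do not expect any serious obstacle here: once central symmetry of $\Omega$ is combined with the area-based definition of the generalized angle, the result is essentially immediate. The only point that warrants a line of care is verifying that the chord $[P_\theta,-P_\theta]$ genuinely splits $\Omega$ into two sectors that match the counter-clockwise arcs from $P_\theta$ to $-P_\theta$ and back — which is clear because the chord passes through $0\in\Int\Omega$.
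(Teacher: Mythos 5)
Your proof is correct. The paper itself offers no proof of this proposition (it is one of the properties stated as following easily from the definition), and your argument --- central symmetry is an area-preserving map fixing the origin, so the chord $[P_\theta,-P_\theta]$ through $0$ splits $\Omega$ into two sectors of area $\tfrac12\SQ(\Omega)$ swapped by the antipodal map, whence $P_{\theta+\SQ(\Omega)}=-P_\theta$ --- is exactly the intended straightforward argument, with the periodicity convention handling general $\theta$.
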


The trigonometric addition formulae take the following form

\begin{prop}
\label{prop:angle_sum}
	Let $e^{i\phi}$ denote the rotation of $\R^2$ by the angle $\phi$ around the origin. Then
	\begin{align*}
		\cos_\Omega\theta \cos\phi -\sin_\Omega\theta\sin\phi &= \cos_{e^{i\phi}\Omega}(\theta+\pi^{-1}_\Omega(\phi)),\\
		\cos_\Omega\theta \sin\phi +\sin_\Omega\theta\cos\phi &= \sin_{e^{i\phi}\Omega}(\theta+\pi^{-1}_\Omega(\phi)).
	\end{align*}

\end{prop}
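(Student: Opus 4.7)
My plan is to read both displayed identities as the single vector equality
\[
    e^{i\phi}P_\theta = P'_{\theta+\pi_\Omega^{-1}(\phi)},
\]
where $P_\theta=(\cos_\Omega\theta,\sin_\Omega\theta)\in\partial\Omega$ and $P'_{\theta'}$ denotes the boundary point of $e^{i\phi}\Omega$ at generalized angle $\theta'$. Indeed, writing out the standard rotation matrix shows that the two left-hand sides are exactly the first and second coordinates of $e^{i\phi}P_\theta$, while the right-hand sides are, by definition of $\cos_{e^{i\phi}\Omega}$ and $\sin_{e^{i\phi}\Omega}$, the coordinates of $P'_{\theta+\pi_\Omega^{-1}(\phi)}$. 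So both equalities collapse into the single geometric claim that these two points in $\R^2$ coincide.

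First I would note that $e^{i\phi}$ sends $\partial\Omega$ to $\partial(e^{i\phi}\Omega)$ as $e^{i\phi}$ is a linear isomorphism, so $e^{i\phi}P_\theta$ certainly lies on $\partial(e^{i\phi}\Omega)$. Hence there is a unique (mod $2\SQ(e^{i\phi}\Omega)=2\SQ(\Omega)$) generalized angle $\theta'$ with $e^{i\phi}P_\theta=P'_{\theta'}$, and the whole proposition reduces to verifying $\theta'=\theta+\pi_\Omega^{-1}(\phi)$.

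To compute $\theta'$, I would apply the definition: $\theta'$ is twice the area of the sector of $e^{i\phi}\Omega$ cut out by the positive $x$-axis $Ox$ and the ray $Oe^{i\phi}P_\theta$ (counterclockwise). Pulling back by $e^{-i\phi}$, which preserves area, turns this into twice the area of the sector of $\Omega$ bounded by the ray $e^{-i\phi}(Ox)$ (the ray of actual angle $-\phi$) and $OP_\theta$. I then decompose this sector along $Ox$ into two pieces: the piece between the ray at actual angle $-\phi$ and $Ox$, and the piece between $Ox$ and $OP_\theta$. By the definitions of $\pi_\Omega^{-1}$ and of the generalized angle, their doubled areas are $\pi_\Omega^{-1}(\phi)$ and $\theta$ respectively, and summing gives the required identity.

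The main obstacle is keeping the orientations and signs straight in the first of the two subsectors, especially when $\phi$ is not small and when the quasi-periodic extension of $\pi_\Omega^{-1}$ from $[0,2\pi)$ to all of $\R$ comes into play; once one fixes the convention $\pi_\Omega(0)=0$ used in the definition of $\pi_\Omega$ and tracks the signed doubled area through the rotation, everything lines up, and the $\sin_{e^{i\phi}\Omega}$ identity follows at no extra cost from the same computation.
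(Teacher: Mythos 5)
Your reduction of the two identities to the single vector statement $e^{i\phi}P_\theta=P'_{\theta+\pi_\Omega^{-1}(\phi)}$, with $P'$ denoting boundary points of $e^{i\phi}\Omega$, is the natural reading (the paper offers no proof of this proposition to compare against), and the pull-back-by-$e^{-i\phi}$ area computation is the right tool. But the one step you explicitly wave through --- ``keeping the orientations and signs straight in the first of the two subsectors \dots\ everything lines up'' --- is exactly where the argument breaks. After pulling back, the first subsector is the sector of $\Omega$ between the ray at classical angle $-\phi$ and the ray $Ox$. With the paper's normalization $\pi_\Omega(0)=0$ and the quasi-periodic extension, its doubled area is $\pi_\Omega^{-1}(0)-\pi_\Omega^{-1}(-\phi)=-\pi_\Omega^{-1}(-\phi)$, whereas $\pi_\Omega^{-1}(\phi)$ is the doubled area of the sector between angles $0$ and $+\phi$. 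For a general $\Omega$ these two sectors have different areas. The case $\theta=0$ already shows this: $e^{i\phi}\hat P$ lies on $\partial(e^{i\phi}\Omega)$ at classical angle $\phi$, and its generalized angle with respect to $e^{i\phi}\Omega$ is twice the area of the part of $\Omega$ swept between angles $-\phi$ and $0$; if, say, $\Omega$ has radius about $2$ for classical angles in $(0,\pi/2)$ and about $1$ for angles in $(-\pi/2,0)$, and $\phi=\pi/2$, the two candidate shifts are roughly $\pi/2$ versus $2\pi$, so the asserted identification is false rather than a sign bookkeeping issue.

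What your computation actually yields, once the signs are tracked (e.g.\ via the radial function $r_\Omega$, for which $\pi_\Omega^{-1}(\phi)=\int_0^\phi r_\Omega^2\,d\psi$), is the shift $-\pi_\Omega^{-1}(-\phi)$, which equals $\pi^{-1}_{e^{i\phi}\Omega}(\phi)$, i.e.\ the reparametrizing constant must be measured in the \emph{rotated} set; it coincides with $\pi_\Omega^{-1}(\phi)$ only under extra symmetry (for instance $\Omega$ symmetric about the axis $Ox$, or a disc). So as written your proof does not establish the displayed formula; to repair it you must either carry out the sign-tracking and state the conclusion with $\pi^{-1}_{e^{i\phi}\Omega}(\phi)$ (equivalently $-\pi_\Omega^{-1}(-\phi)$) in place of $\pi^{-1}_\Omega(\phi)$, or add a hypothesis making the two shifts equal. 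Note that this correction is harmless for the way the proposition is used later (on the Cartan group only the existence of some constant shift of the angle under rotation of $\Omega$ is needed), but the gap in your argument is genuine: the ``main obstacle'' you name is not a nuisance to be waved away, it is the entire content of the step.
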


The last thing we need is the analogue of the polar change of coordinates:
\[
	\begin{cases}
		x=r\cos_\Omega\theta;\\
		y=r\sin_\Omega\theta.
	\end{cases}
\]

\noindent This change of variables is smooth in $r$ and Lipschitz continuous in $\theta$. Hence it has a.e.\ partial derivative with respect to $\theta$. The Jacobian matrix has the following form
\[
	\begin{pmatrix}
		x'_r & y'_r\\
		x'_\theta & y'_\theta
	\end{pmatrix}
	=
	\begin{pmatrix}
		\cos_\Omega\theta & \sin_\Omega\theta\\
		-r\sin_{\Omega^\polar}\theta^\polar & r\cos_{\Omega^\polar}\theta^\polar\\
	\end{pmatrix},
	\quad\mbox{where}\quad
	\theta^\polar \leftrightarrow\theta.
\]

\noindent Using the main trigonometric identity we see that the Jacobian is equal to~$r$.

Let us find the inverse change of variables $r(x,y)$ and $\theta(x,y)$. The radius can be found as follows
\[
	s_{\Omega^\polar}(x,y) = s_{\Omega^\polar}(r\cos_\Omega\theta,r\sin_\Omega\theta) =
	rs_{\Omega^\polar}(\cos_\Omega\theta,\sin_\Omega\theta) = r.
\]

The inverse Jacobian matrix for finding $\theta$ is computed as
\[
	\begin{pmatrix}
		r'_{x} & \theta'_{x}\\
		r'_{y} & \theta'_{y}\\
	\end{pmatrix}
	=
	\begin{pmatrix}
		\cos_{\Omega^\polar}\theta^\polar & -\frac1r\sin_\Omega\theta\\
		\sin_{\Omega^\polar}\theta^\polar & \frac1r\cos_\Omega\theta\\
	\end{pmatrix}.
\]

\noindent Thus for a.e.~$\theta$, we have
\begin{equation}
\label{eq:d_theta}
	d\theta(x,y) =  \frac{xdy-ydx}{s_{\Omega^\polar}^2(x,y)}.
\end{equation}

Now let us connect the point $(1,0)$ with the point $(\hat x,\hat y)$ (different from the origin) by an arbitrary curve $\gamma$ that does not contain the origin. The value  $\theta(\hat x,\hat y)$ is equal to the integral\footnote{Intervals of $\gamma$ where $\theta$ is constant do not affect the integral value, since $x\,dy-y\,dx=0$ on these intervals.}
\[
	\theta(\hat x,\hat y) = \int_\gamma \frac{xdy-ydx}{s_{\Omega^\polar}^2(x,y)}.
\]

\noindent Is it easy to see by Green's theorem that this integral is equal to the doubled area of the set that is swept by the radius vector of the point $(\frac{x}{r},\frac{y}{r})\in\partial\Omega$ while the point $(x,y)$ is moving along $\gamma$ (formula~\eqref{eq:d_theta} can also be obtained from this observation).

Note that in the classical case the angles are defined up to a summand $2\pi k$, $k\in\Z$. Here we have a similar situation: generalized angles are defined up to a summand $2\SQ(\Omega)k$, $k\in\Z$, which depends on the number of rotations around the origin that $\gamma$ makes.

Using the obtained formulae we will be able to completely compute in the next section the functions $\cos_\Omega$ and $\sin_\Omega$ in the case when $\Omega$ is an arbitrary polygon. Now let us compute these functions for some other useful cases.

\begin{example}
\label{ex:parametric_border}
	Assume that the boundary of $\Omega$ is defined parametrically,
	\[
		\partial\Omega=\{(x(t),y(t))\in\partial\Omega, t\in[0;t_0]\}.
	\]

	\noindent In this case for any $t$ there is defined a generalized angle $\theta(t)$ such that $P_{\theta(t)}=(x(t),y(t))$. Using~\eqref{eq:d_theta} we get\footnote{Of course the formula has this form, since $\theta$ is the doubled area of the correspond sector.}
	\[
		\theta(t)=\int_0^t\big[x(\tau)\dot y(\tau)-y(\tau)\dot x(\tau)\big]\,d\tau,
	\]

	\noindent since $s_{\Omega^\polar}(x(\tau),y(\tau))=1$. We have $\cos_\Omega\theta(t)=x(t)$ and $\sin_\Omega\theta(t)=y(t)$. Thus it remains to find the inverse function $t(\theta)$ and substitute it into $x(t)$ and $y(t)$.
\end{example}

\begin{example}
	If $\Omega$ is an ellipse with boundary $x(t)=a\cos t$, $y(t)=b\sin t$, then $\theta(t) = ab\,t$. So $\cos_\Omega\theta = a\cos\frac{\theta}{ab}$ and $\sin_\Omega\theta = b\sin\frac{\theta}{ab}$.

	The functions $\cos_\Omega$ and $\sin_\Omega$ suggested in the paper are closely related to the Jacobi elliptic functions in the case when $\Omega$ is an ellipse. For convenience we set $a=1$ and $b>1$. We put as always $m=1-1/b^2$ and $k^2=m$. Thus we obviously have
	\[
		\cos_\Omega\theta = \mathrm{cd\,}u=\frac{\mathrm{cn\,}u}{\mathrm{dn\,}u}
		\quad\mbox{and}\quad
		\sin_\Omega\theta = \mathrm{sd\,}u=\frac{\mathrm{sn\,}u}{\mathrm{dn\,}u}.
	\]

	\noindent Relations between the parameters $\theta$, $u$, and $\varphi$ can be easily found by dividing the previous equalities one by the other:
	\[
		\tg\left(\sqrt{1-k^2}\,\theta\right) = \sqrt{1-k^2}\,\mathrm{sc\,}u = \sqrt{1-k^2}\tg\varphi,
	\]

	\noindent (here $\varphi$ is the amplitude). The ratios of $d\theta$ to $du$ and $du$ to $d\varphi$ can be found by straightforward differentiation:	
	\[
		\frac{d\theta}{du}=\frac{du}{d\phi} = \frac{1}{\mathrm{dn\,}u}.
	\]
\end{example}

\begin{example}
	Let us compute the functions $\cos_\Omega$ and $\sin_\Omega$ under the assumption that the support function of the the polar set is known. Equivalently we may assume that $\Omega$ is given by an inequality for a non-negative positively homogeneous convex function: $\Omega=\{(x,y):s_{\Omega^\polar}(x,y)\le 1\}$. Let $\phi$ denote a classical angle and let $\theta(\phi)=\pi^{-1}_\Omega(\phi)$ be the corresponding generalized angle (see \eqref{eq:pi_omega_theta}).

	Let us parametrize the boundary $\partial\Omega$ by $\phi$, i.e.
	\[
		x(\phi)=\cos_\Omega\theta(\phi) = \frac{\cos\phi}{s_{\Omega^\polar}(\cos \phi,\sin\phi)}
		\quad\mbox{and}\quad
		y(\phi)=\cos_\Omega\theta(\phi) = \frac{\sin\phi}{s_{\Omega^\polar}(\cos \phi,\sin\phi)}.
	\]

	\noindent Denote  $r(\phi)=s_{\Omega^\polar}(\cos\phi,\sin\phi)$ for short. Then
	\begin{multline*}
		x\,dy-y\,dx = 
		\frac{\cos\phi}{r(\phi)}d\left(\frac{\sin\phi}{r(\phi)}\right) -
		\frac{\sin\phi}{r(\phi)}d\left(\frac{\cos\phi}{r(\phi)}\right) =\\
		=\frac{\cos^2\phi+\sin^2\phi}{r^2(\phi)} +
		\frac{\cos\phi\sin\phi}{r(\phi)}\left(d\frac{1}{r(\phi)}-d\frac{1}{r(\phi)}\right) =
		\frac{1}{r^2(\phi)}.
	\end{multline*}

	\noindent So
	\[
		\theta(\phi) = \pi^{-1}_\Omega(\phi) =
		\int_0^\phi \frac{d\psi}{s_{\Omega^\polar}^2(\cos\psi,\sin\psi)},
	\]
	
	\noindent and the values $\cos_\Omega\theta$ and $\sin_\Omega\theta$ can be found similarly to example \ref{ex:parametric_border}.
\end{example}

\section{Computation of the functions \texorpdfstring{$\cos_\Omega$}{cos} and \texorpdfstring{$\sin_\Omega$}{sin} for the polygon case.}
\label{sec:polyhedron}

Let $\Omega$ be a convex polygon in $\R^2$ with $n$ vertices and $0\in\Int\Omega$. Denote by $\hat P$ the intersection point of the ray $Ox$ with the boundary $\partial\Omega$. Next let us choose a counter-clockwise numbering of vertices $P_1$, $P_2$, $\ldots$, $P_n$ such that if $\hat P$ is a vertex, then $P_1=\hat P$, and if $\hat P$ belongs to the interior of an edge, then $P_1$ is a vertex of this edge lying in the upper half-plane (see Fig.~\ref{fig:polyhedron}). Denote by $x_k$ and $y_k$ the coordinates of $P_k=(x_k,y_k)$ and put $\hat P=(\hat x,0)$.

\begin{figure}[ht]
	\centering
	\includegraphics[width=0.5\textwidth]{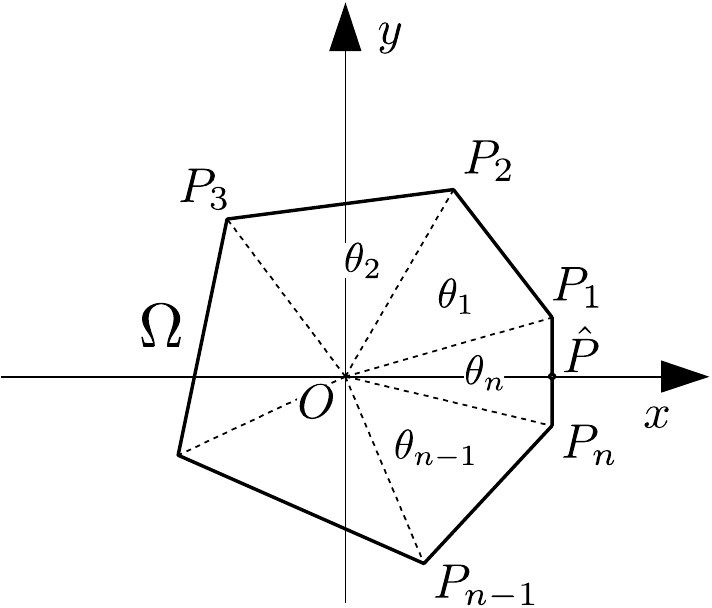}
	\caption{Computation of the functions $\cos_\Omega$ and $\sin_\Omega$ for the case when $\Omega$ is a polygon.}
	\label{fig:polyhedron}
\end{figure}

Let us find the generalized angles $\Theta_k$ corresponding to $P_k$, i.e. $P_k=P_{\Theta_k}$. We start with\footnote{Here $[\,\cdot\times\cdot\,]$ denotes the corresponding parallelogram's orientated area.} $\Theta_1$,
\[
	\Theta_1 = 2\SQ(\triangle O\hat PP_1) = [\hat P\times P_1] = \hat x y_1.
\]

The remaining $\Theta_k$ are also easy to find. If we denote by $\theta_k$ the doubled area of the triangle $\triangle OP_kP_{k+1}$ (i.e. $\theta_k=[P_k\times P_{k+1}]=x_ky_{k+1}-y_kx_{k+1}$), then
\[
	\Theta_{k+1} = 2\SQ(\triangle O\hat PP_1) + 2\sum_{j=1}^k\SQ(\triangle OP_jP_{j+1}) = \Theta_1+\sum_{j=1}^k\theta_j 
	= \Theta_k+\theta_k.
\]

Throughout we continue in natural way the numbering for indexes $k\le0$ or $k>n$. For example $P_{n+1}=P_1$, $\theta_{n+1}=\theta_{1}$, $\Theta_{n+1}=\Theta_1+2\SQ(\Omega)$ etc.

Now let us compute the functions $\cos_\Omega\theta$ and $\sin_\Omega\theta$. The period is easy to find,
\[
	2\SQ(\Omega) = \sum_{j=1}^{n}\theta_j.
\]

\begin{prop}
	If $\Omega$ is a convex polygon and $0\in\Int\Omega$, then each of the functions $\cos_\Omega$ and $\sin_\Omega$ is linear on any interval as the point $P_\theta$ moves along the same edge and does not pass through a vertex.
\end{prop}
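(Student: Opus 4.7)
The plan is very short: the result is essentially a direct consequence of Theorem~\ref{thm:derivatives}, and it also admits an elementary verification from the definition.

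The first approach is via differentiation. Suppose $P_\theta$ lies in the open interior of an edge $[P_k,P_{k+1}]$ of $\Omega$. Then the supporting half-plane of $\Omega$ at $P_\theta$ is unique (it is the half-plane containing this whole edge in its boundary), so there is a single covector, i.e. a single vertex of $\Omega^\polar$, realizing it. In particular, the correspondence $\theta\leftrightarrow\theta^\polar$ picks out one constant value of $\theta^\polar$ (modulo $2\SQ(\Omega^\polar)$) for every $\theta$ in the open interval whose $P_\theta$ lies on that edge. Theorem~\ref{thm:derivatives} then gives
\[
\cos'_\Omega\theta = -\sin_{\Omega^\polar}\theta^\polar, \qquad \sin'_\Omega\theta = \cos_{\Omega^\polar}\theta^\polar,
\]
and since $\theta^\polar$ is constant on the interval, both right-hand sides are constants. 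Hence $\cos_\Omega$ and $\sin_\Omega$ are affine (that is, ``linear'' in the sense of the statement) on this interval.

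As a direct cross-check, one can parameterize the edge by $P=(1-s)P_k+sP_{k+1}$, $s\in[0,1]$. The sector of $\Omega$ from $\hat P$ to $P$ splits into the fixed sector up to $P_k$ plus the triangle $\triangle OP_kP$, whose doubled area equals $[P_k\times P]=s\,[P_k\times P_{k+1}]=s\theta_k$. Thus $\theta=\Theta_k+s\theta_k$, so $s=(\theta-\Theta_k)/\theta_k$ is affine in $\theta$; since the coordinates of $P$ are affine in $s$, the functions $\cos_\Omega\theta$ and $\sin_\Omega\theta$ are affine in $\theta$ on the whole interval $[\Theta_k,\Theta_{k+1}]$.

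There is essentially no obstacle. The only fact that deserves a sentence of justification is the edge-to-vertex duality used in the first approach (or, equivalently, the linear dependence of the sector area on the parameter $s$ used in the second); both are immediate and have already been exploited implicitly in the computation of the $\Theta_k$ preceding the statement.
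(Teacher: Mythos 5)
Your proposal is correct, and in fact it contains two proofs. Your ``cross-check'' is essentially the paper's own argument: the paper parametrizes the edge with constant velocity $A_t=(x_0+at,y_0+bt)$, observes that the doubled area of the triangle $OA_0A_t$ is linear in $t$, so $\theta_t$ is affine in $t$, and then reads off $\cos_\Omega\theta_t=x_0+at$, $\sin_\Omega\theta_t=y_0+bt$; your convex-combination parameter $s$ plays exactly the role of $t$, and your identity $[P_k\times P]=s\,\theta_k$ is the same area computation, with the bonus that it pins down the explicit affine formulas on $[\Theta_k,\Theta_{k+1}]$ used right after the proposition. Your first approach, via Theorem~\ref{thm:derivatives}, is a genuinely different route: edge--vertex duality makes $\theta^\polar$ constant (mod $2\SQ(\Omega^\polar)$) on the open interval, so $\cos'_\Omega$ and $\sin'_\Omega$ are constant there, and since these functions are Lipschitz (hence absolutely continuous), a constant derivative on an interval forces affinity --- that last one-line justification is the only thing worth stating explicitly. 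The derivative route is slicker once Theorem~\ref{thm:derivatives} is available and illustrates the $\Omega\leftrightarrow\Omega^\polar$ duality, while the paper's (and your second) argument is more elementary, using only the definition of the generalized angle as a sector area.
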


\begin{proof}
	Chose an edge of $\Omega$ and a point $A_0=(x_0,y_0)$ lying on it. Let us make the point $(x,y)$ to move along this edge with a constant velocity $A_t=(x_0+at,y_0+bt)$. For any $t$ there is a generalized angle $\theta_t$ defined by the equation $P_{\theta_t}=A_t$. Since the area of the triangle $OA_0A_t$ depends linearly on $t$, $\theta_t$ is also a linear function of $t$. It remains to note that $\cos_\Omega \theta_t=x_0+at$ and $\sin_\Omega\theta_t=y_0+bt$.
\end{proof}

Hence the values of $\cos_\Omega\theta$ and $\sin_\Omega\theta$ at the vertices $P_k=P_{\Theta_k}$ determine these functions on the whole boundary $\partial\Omega$. Indeed, it is sufficient to prolong $\cos_\Omega\theta$ and $\sin_\Omega\theta$ piecewise-linearly from its values at $\Theta_k$ with period $2\SQ(\Omega)$. So $\cos_\Omega\Theta_k = x_k$ and $\sin_\Omega\Theta_k = y_k$. Therefore for any $\theta\in[\Theta_k;\Theta_{k+1}]$ we have
\[
	\cos_\Omega\theta = x_k + \frac{\theta-\Theta_k}{\theta_k}(x_{k+1}-x_k)
	\quad\mbox{and}\quad
	\sin_\Omega\theta = y_k + \frac{\theta-\Theta_k}{\theta_k}(y_{k+1}-y_k).
\]

\begin{figure}[ht]
	\centering
	\includegraphics[width=0.5\textwidth]{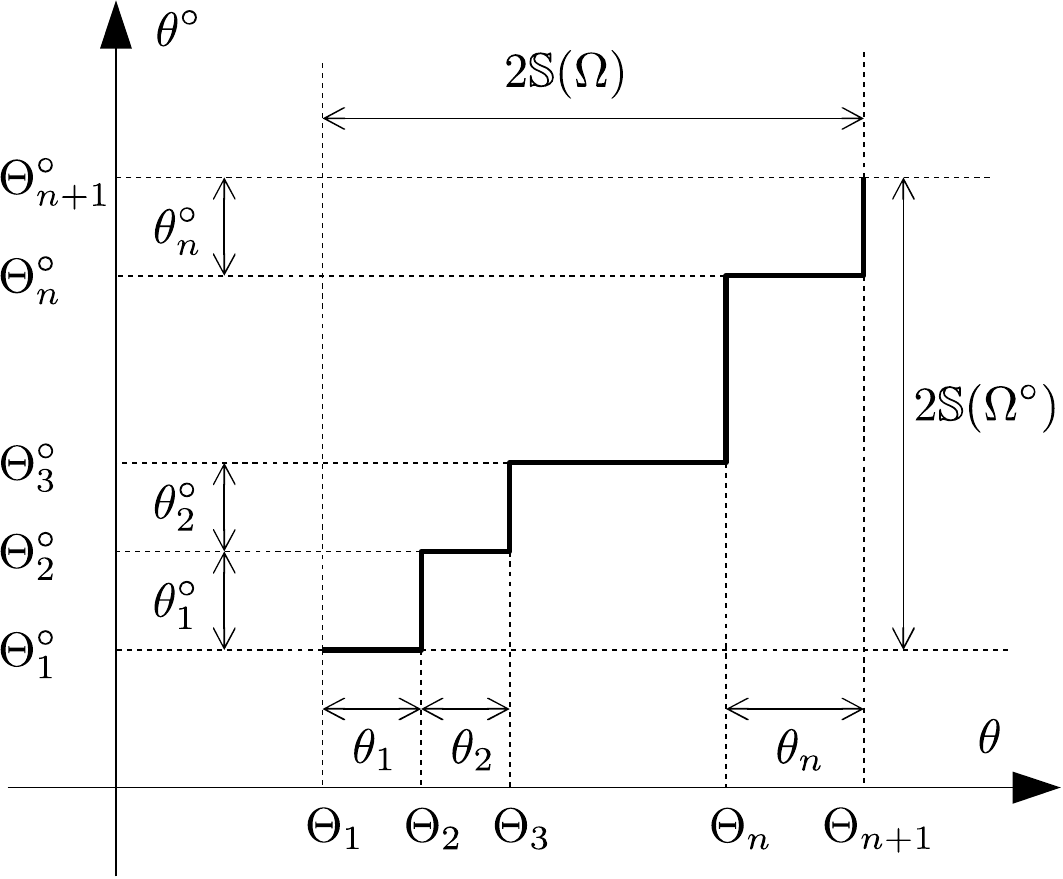}
	\caption{The dependence $\theta^\polar(\theta)$ for the case when $\Omega$ is a convex polygon.}
	\label{fig:theta_theta_polar_polyhedron}
\end{figure}

The last thing we need is to construct a transition to the polar set $\Omega^\polar$, which is also a polygon. The vertices of $\Omega$ become edges of $\Omega^\polar$ and vice versa. Denote by $Q_k$ the vertices of $\Omega^\polar$. So $Q_k$ can be found from the following conditions\footnote{We use the notation of angle brackets for the dot product $\langle(x,y),(p,q)\rangle=px+qy$, as usual.}
\[
	\langle Q_k,P_k \rangle = \langle Q_k,P_{k+1}\rangle = 1.
\]

\noindent Straightforward computation gives
\[
	Q_k = \frac{1}{\theta_k}e^{-i\pi/2}(P_{k+1}-P_k)
	=\frac{1}{\theta_k}\big(y_{k+1}-y_k,x_k-x_{k+1}\big).
\]

Now we are able to compute the generalized angles $\Theta_k^\polar$ for $\Omega^\polar$ using the same procedure we have used for $\Omega$. Indeed,
\[
	\Theta_{k+1}^\polar = \Theta_k^\polar + \theta_k^\polar,
\]
	
\noindent where
\[
	\theta_k^\polar = 2\SQ(OQ_kQ_{k+1}) = [OQ_k\times OQ_{k+1}] 
	= \frac{1}{\theta_k} + \frac{1}{\theta_{k+1}} - \frac{[P_k\times P_{k+2}]}{\theta_k\theta_{k+1}}.
\]

\noindent The function $\theta^\polar(\theta)$ has a stair-form structure as it is shown in Fig.~\ref{fig:theta_theta_polar_polyhedron}.

It is important to note here that the labeling of the vertices of $\Omega^\polar$ may not satisfy the rule of choosing of the first vertex, i.e. it may happen that the vertex $Q_1$ is not the ``first'' one. Namely, the edge $Q_nQ_1$ may not intersect the ray $Op$. Thus it remains to find the number $k$ of the ``first'' vertex $Q_k$ and to compute the area of the corresponding triangle $OQ_k\hat Q$ where $\hat Q=Op\cap \partial\Omega^\polar$.

Let $\hat Q$ be the intersection point of the ray $Op$ and the corresponding edge $Q_{k-1}Q_k$ of the polar set $\Omega^\polar$. It is very easy to find the point $\hat Q$ and the number $k$. If the polygon $\Omega$ has exactly one vertex $P_k$ with maximal first coordinate $x_k>0$, then $k$ is the required number and $\hat Q=(\frac{1}{x_k},0)$. If the polygon $\Omega$ has two vertices $P_k$ and $P_{k+1}$ with maximal first coordinate $x_k=x_{k+1}>0$ (i.e. $\Omega$ has a vertical edge lying in the right half-plane $x>0$), then $k$ is the required number and $\hat Q$ coincides with $Q_k$:  $\hat Q=Q_k = (\frac{1}{x_k},0)$.

\begin{figure}[ht]
	\centering
	\begin{subfigure}[b]{0.3\textwidth}
		\includegraphics[width=\textwidth]{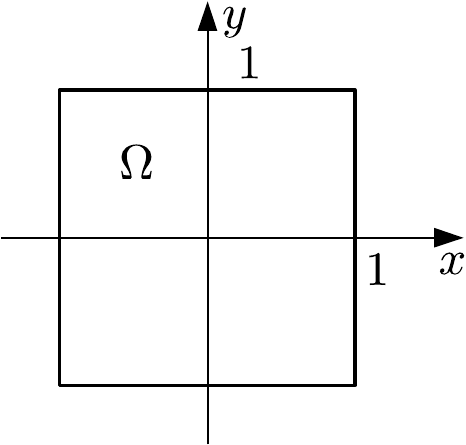}
	\end{subfigure}
	\ \ \ 
	\begin{subfigure}[b]{0.3\textwidth}
		\raisebox{11.5mm}{\includegraphics[width=\textwidth]{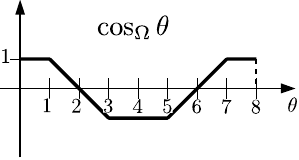}}
	\end{subfigure}
	\ \ \ 
	\begin{subfigure}[b]{0.3\textwidth}
		\raisebox{11.5mm}{\includegraphics[width=\textwidth]{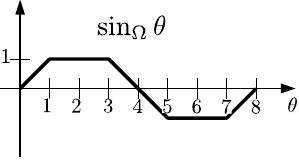}}
	\end{subfigure}\\
	\begin{subfigure}[b]{0.3\textwidth}
		\includegraphics[width=\textwidth]{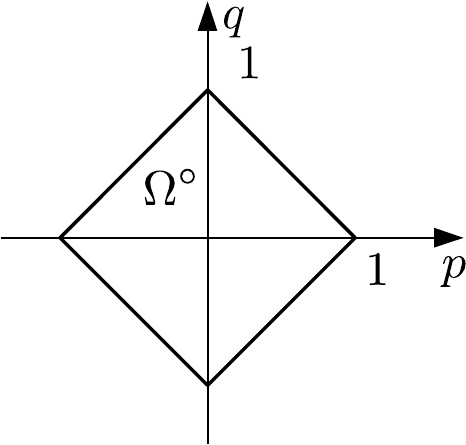}
	\end{subfigure}
	\ \ \ 
	\begin{subfigure}[b]{0.3\textwidth}
		\raisebox{11.5mm}{\includegraphics[width=\textwidth]{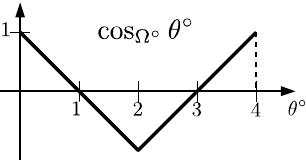}}
	\end{subfigure}
	\ \ \ 
	\begin{subfigure}[b]{0.3\textwidth}
		\raisebox{11.5mm}{\includegraphics[width=\textwidth]{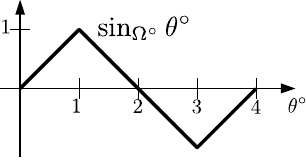}}
	\end{subfigure}
	\caption{Generalized trigonometric functions for the squares $\Omega$ and $\Omega^\polar$.}
	\label{fig:squares_cos_sin}
\end{figure}

\begin{example}
	Let $\Omega=\{|x|\le 1,|y|\le 1\}$. Then $\Omega^\polar=\{|x|+|y|\le 1\}$. The periods are $2\SQ(\Omega)=8$ and $2\SQ(\Omega^\polar)=4$. The graphs of the trigonometric functions for these sets are depicted in Fig.~\ref{fig:squares_cos_sin}. The function $\theta^\polar(\theta)$ is depicted in Fig.~\ref{fig:squares_theta_theta_polar}.
\end{example}

\begin{figure}[ht]
		\centering
		\includegraphics[width=0.4\textwidth]{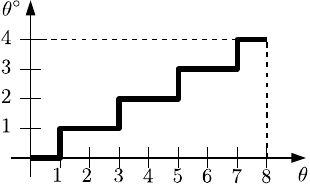}
		\caption{The dependence $\theta^\polar(\theta)$ for the square $\Omega=\{|x|\le1,|y|\le 1\}$.}
		\label{fig:squares_theta_theta_polar}
\end{figure}

\section{Heisenberg group}
\label{sec:Heisenberg}

In this section we will integrate equations of left-invariant sub-Finsler geodesic flows on the Heisenberg group with an arbitrary set $\Omega$ of unit velocities. So we will solve the following time-optimal problem
\[
	T\to\min,
\]
\[
	\dot x_1=u_1,\quad\dot x_2=u_2,\quad \dot z = \frac12(x_1u_2-x_2u_1),\qquad u=(u_1,u_2)\in\Omega.
\]

\noindent Here $\Omega\subset\R^2$ is an arbitrary compact convex set containing the origin in its interior, $0\in\Int\Omega$. We will use terminal constraints of the following general form:
\[
	x_1(0)=x_2(0)=z(0)=0,\qquad x_1(T)=x_1^0,\ x_2(T)=x_2^0,\ z(T)=z^0.
\]

A very detailed investigation of this problem for the case in which the initial and end points coincide can be found in~\cite{Busemann}. Full description of geodesics in this problem can be found in~\cite{BerestovskiiHeisenberg}. Here, in this section we will integrate the equations from Pontryagin's maximum principle in terms of the functions $\cos_\Omega$ and $\sin_\Omega$, which will greatly simplify computations.

Pontryagin's maximum principle reads as
\[
	\mathcal{H} = p_1 u_1 + p_2 u_2 + \frac12 q (x_1u_2-x_2u_1) = (p_1 - \frac12 qx_2)u_1 + (p_2+\frac12qx_1)u_2.
\]

\noindent Here $p_1$, $p_2$, and $q$ are the adjoint variables to $x_1$, $x_2$, and $z$, correspondingly. Maximum of $\mathcal{H}$ in $u\in\Omega$ can be written very conveniently in terms of the support function of the set~$\Omega$:
\[
	H=\max_{u\in\Omega}\mathcal{H} = s_\Omega\left(p_1 - \frac12 qx_2,\, p_2+\frac12qx_1\right).
\]

Let us denote for short the arguments of the support function by\footnote{Of course, $h_1$, $h_2$, and $h_3=q$ are left-invariant linear on fibers functions on the cotangent bundle of the Heisenberg group. Thus they form a basis of the Lie algebra and simultaneously they are coordinates on the Lie coalgebra. Obviously, with this choice of coordinates, the right-hand side of the equations from Pontryagin's maximum principle on $h_i$ will be separated from the rest of the system, and they will describe the flow of the Hamiltonian $\mathcal{H}$ on the Lie coalgebra w.r.t. the canonical Lie--Poisson bracket. However, these considerations are not essential for what follows.} $h_1 = p_1 - \frac12 qx_2$, $h_2=p_2+\frac12qx_1$. Obviously, $\ddt H=0$, i.e., $H=\const$. If $H=0$, then $h_1=h_2\equiv0$ and $q\ne0$ (since all the adjoint variables can not be equal to zero simultaneously), thus from $\dot h_1=\dot h_2\equiv 0$ we have $u_1=u_2\equiv0$, and the obtained trajectory is not optimal. So $H>0$. Consequently the point $(h_1,h_2)$ moves along the boundary of the polar set $\Omega^\polar$ stretched by $H$ times:
\[
	(h_1,h_2)\in H\partial\Omega^\polar.
\]

Now let us find the ``velocity'' of the point $(h_1,h_2)$. Substituting $\dot p_i=-\mathcal{H}'_{x_i}$ and $\dot q=-\mathcal{H}'_z$, we have
\[
	\dot h_1 = -q u_2,\quad\dot h_2 = qu_1,\quad\mbox{and}\quad \dot q=0.
\]

These equations can be conveniently integrated in terms of the generalized trigonometric functions. Put
\[
	h_1=r\cos_{\Omega^\polar}\theta^\polar\quad\mbox{and}\quad h_2=r\sin_{\Omega^\polar}\theta^\polar.
\]

\noindent We have $r=s_\Omega(h_1,h_2)=H=\const\ne 0$ as was mentioned above.

Let us compute the control function. Since $u=(u_1,u_2)\in\partial\Omega$, we have
\[
	u_1=\cos_\Omega\theta\quad\mbox{and}\quad u_2=\sin_{\Omega}\theta
\]

\noindent for some $\theta$. We claim that $\theta$ and $\theta^\polar$ correspond one to the other. Indeed,

\[
	u_1\cos_{\Omega^\polar}\theta^\polar + u_2\sin_{\Omega^\polar}\theta^\polar = \frac{1}{H}(h_1u_1+h_2u_2) = 1.
\]
\noindent Thus $\theta\leftrightarrow\theta^\polar$ by Theorem~\ref{thm:main_trig_thm}.

The last thing we need is to compute the derivative of $\theta^\polar$. According to~\eqref{eq:d_theta},
\[
	\dot\theta^\polar = \frac{h_1\dot h_2-h_2\dot h_1}{H^2} = 
	\frac{q}{H}(\cos_{\Omega^\polar}\theta^\polar \cos_\Omega\theta + \sin_{\Omega^\polar}\theta^\polar\sin_\Omega\theta) =
	\frac{q}{H}=\const.
\]

Hence an analogue of Kepler's law is fulfilled in any left-invariant sub-Finsler problem on the Heisenberg group: the radius vector of the point $(h_1,h_2)$ sweeps out equal areas during equal intervals of time:
\[
	\theta^\polar = \frac{q}{H}t + \theta_0^\polar;
\]

\noindent however, the point $(h_1,h_2)$ moves along the boundary of the stretched polar set~$H\partial\Omega^\polar$ rather than along an ellipse.

For example, using the above formulae it is very easy to find the first conjugate point $t_{conj}$: if $q\ne 0$, then the first point appears exactly at the instant when the point $(h_1,h_2)$ makes a full round, i.e. $\theta^\polar(t_{conj})=\theta^\polar_0+2\SQ(\Omega^\polar)$. So if $q\ne 0$, then
\[
	t_{conj} = \frac{2H}{q}\SQ(\Omega^\polar).
\]

Now we integrate the rest of Pontryagin's system. The control can be found by the angle $\theta$, but the correspondence $\theta\leftrightarrow\theta^\polar$ is not one-to-one. If the point $Q_{\theta^\polar}\in\partial\Omega^\polar$ (constructed by $\theta^\polar$) passes through a point where the boundary $\partial\Omega^\polar$ has a corner, then the generalized angle $\theta$ is not unique at this instant (it may take any value on the corresponding edge of $\Omega$). The boundary of a plane convex set, however, may have only finite or countable number of corner points (since the sum of angles of these corner points never exceed $2\pi$). Thus if the velocity $\frac{q}{H}$ is different from 0, then both the angle $\theta$ and the control $u$ are found uniquely by $\theta^\polar$ for a.e.~$t$.

Thus if $q\ne 0$, then
\[
	\dot x_1=\dot h_2/q\quad\mbox{and}\quad\dot x_2=-\dot h_1/q.
\]

\noindent From $x_1(0)=x_2(0)=0$ we obtain
\[
	x_1=\frac{H}{q}\left(\sin_{\Omega^\polar}\big(\frac{q}{H}t+\theta^\polar_0\big) - \sin_{\Omega^\polar}\theta^\polar_0\right)
	\quad\mbox{and}\quad
	x_2=-\frac{H}{q}\left(\cos_{\Omega^\polar}\big(\frac{q}{H}t+\theta^\polar_0\big) - \cos_{\Omega^\polar}\theta^\polar_0\right).
\]

\noindent Consequently the point $(x_1,x_2)$ moves along the shifted and rotated by $-\frac\pi2$ boundary of the polar set $\Omega^\polar$:
\[
	(x_1,x_2) \in \frac{H}{q}e^{-i\pi/2}\Omega^\polar +
	\frac{H}{q}(-\sin_{\Omega^\polar}\theta^\polar_0,\cos_{\Omega^\polar}\theta^\polar_0).
\]

Substituting $x_1$, $x_2$, $u_1$, and $u_2$ in the equation for $\dot z$ we get
\[
	\dot z = \frac{H}{2q}\big[
		1 - (\cos_{\Omega^\polar}\theta^\polar_0 \cos_{\Omega}\theta +
		\sin_{\Omega^\polar}\theta^\polar_0\sin_{\Omega}\theta)
	\big].
\]

Using Theorem~\ref{thm:derivatives} we have $\frac{d}{dt}\cos_{\Omega^\polar}\theta^\polar = -\dot\theta^\polar\sin_\Omega\theta$ and a similar formula for the derivative $\frac{d}{dt}\sin_{\Omega^\polar}\theta^\polar$. Since $\dot\theta^\polar=\frac{q}{H}=\const$, we have
\[
	z=\frac{H^2}{2q^2}\big(
		\theta^\polar + 
		\cos_{\Omega^\polar}\theta^\polar\sin_{\Omega^\polar}\theta^\polar_0 -
		\sin_{\Omega^\polar}\theta^\polar\cos_{\Omega^\polar}\theta^\polar_0
	\big).
\]

\noindent If the set $\Omega$ were the unit circle, then the difference in the written equation would turn into the sine of the difference. But this is not true in the general case.

Consider  the case $q=0$, i.e. $\theta^\polar=\theta^\polar_0=\const$. If the boundary $\partial\Omega^\polar$ is smooth at the point $Q_{\theta^\polar_0}$, then the angle $\theta=\theta_0$ corresponding to $\theta^\polar_0$ is unique, and we obtain the following straight line trajectory:
\[
	x_1=t\cos_\Omega\theta_0,\quad x_2=t\sin_\Omega\theta_0,\quad z=0.
\]

\noindent If the boundary $\partial\Omega^\polar$ has a corner at the point $Q_{\theta^\polar_0}$, then this point corresponds to a whole edge in $\partial\Omega$, and the control $(u_1,u_2)$ may take an arbitrary value lying on this edge at any instant. Usually this type of control is called \textit{singular on this edge of} $\Omega$. Using a rotation of $x_1,x_2$ we may assume that this edge is horizontal and lies at a height $\gamma>0$. If we restrict the control to this edge, we will get a new control system with $x_2=\gamma t$ and
\[
	\dot x_1=u_1\in[a;b],\quad \dot z = \frac\gamma2(x_1-tu_1).
\]

\noindent Any solution of this control system is optimal for the original system, since the second coordinate $x_2(T)$ at terminal end instant $T$ takes the maximal possible value $\gamma T$ among all trajectories of the original system\footnote{This idea belongs to the school of Yu.L.~Sachkov.}. Indeed, if $(u_1,u_2)\in\Omega$, then $u_2\le \gamma$, and consequently $x_2(T)\le\gamma T$.

Thus we reduce investigation of singular extremals on an edge of $\Omega$ to an investigation of a reachable set of a control system with one-dimensional control. Problems of this type can be solved by geometric version of Pontryagin's maximum principle (see~\cite[Theorem 12.1]{AgrachevSachkov}) and lie out of the main topic of this paper as the control becomes one-dimensional.

\section{The Grushin problem}

Consider the Grushin problem
\[
	T\to\min,
\]
\[
	\dot x_1=u_1,\quad \dot x_2 = x_1u_2,\quad u=(u_1,u_2)\in\Omega.
\]

\noindent Here $\Omega\subset\R^2$ is a convex compact set and $0\in\Int\Omega$, as always. We will use terminals constraints of the following general type:
\[
	x_1(0)=x_1^0,\ x_2(0)=x_2^0,\quad x_1(T)=x_1^1,\ x_2(T)=x_2^1.
\]

According to Pontryagin's maximum principle,
\[
	\mathcal{H} = p_1u_2 + p_2x_1u_2,
\]

\noindent where $p_1$ and $p_2$ are adjoint variables to $x_1$ and $x_2$. The maximum of $\mathcal{H}$ in $u\in\Omega$ has the following form:
\[
	H=\max_{u\in\Omega}\mathcal{H} = s_\Omega(p_1,p_2x_1).
\]

We claim that $H\ne 0$. Indeed, if $H=0$, then $p_1=p_2x_1\equiv 0$, which gives $p_1\equiv 0$ and $p_2=\const\ne 0$. Thus $x_1\equiv 0$ and consequently $x_2\equiv\const$. Obviously, the trajectory obtained is not optimal.

Similarly to Heisenberg's group, we denote arguments of the support function by $h_1=p_1$ and $h_2=p_2x_1$. Further, since $H=\const>0$, we have
\[
	(h_1,h_2) \in H\partial\Omega.
\]

Using the direct substitution $p_i=-\mathcal{H}'_{x_i}$ we obtain
\[
	\dot h_1 = -p_2u_2,\quad \dot h_2 = p_2u_1.
\]

\noindent Since $\dot p_2=0$, we arrive at the equations similar to the vertical part of the Pontryagin system for the Heisenberg case. They can be integrated in a similar manner by the generalized polar change of coordinates. Since $s_\Omega(h_1,h_2)=H$, the radius in this change if equal to $H$, i.e.,
\[
	h_1=H\cos_{\Omega^\polar}\theta^\polar\quad\mbox{and}\quad h_2=H\sin_{\Omega^\polar}\theta^\polar
\]

\noindent for an angle $\theta^\polar$. The derivative of $\theta^\polar$ can be found from~\eqref{eq:d_theta}:
\[
	\dot\theta^\polar = \frac{h_1\dot h_2-h_2\dot h_1}{s_\Omega^2(h_1,h_2)} =
	p_2\,\frac{h_1u_2+h_2u_2}{H^2} = \frac{p_2}{H}.
\]

\noindent Since $\dot p_2=0$, we obtain
\[
	\theta^\polar = \frac{p_2}{H}t + \theta^\polar_0,
\]

\noindent i.e.,
\[
	h_1=H\cos_{\Omega^\polar}\left(\frac{p_2}{H}t + \theta^\polar_0\right)
	\quad\mbox{and}\quad
	h_2=H\sin_{\Omega^\polar}\left(\frac{p_2}{H}t + \theta^\polar_0\right).
\]

Now we find the control. Using Theorem~\ref{thm:main_trig_thm} on the main trigonometric identity for the equation $h_1u_1+h_2u_2=H$ we obtain
\[
	u_1=\cos_\Omega\theta,\quad u_2=\sin_\Omega\theta
\]

\noindent for an angle $\theta\leftrightarrow\theta^\polar$.

Now let us integrate the remaining equations, i.e. find $x_1$ and $x_2$. If $p_2\ne 0$, then the angle $\theta$ (and thus the control) is uniquely determined by $\theta^\polar$ for a.e.~$t$. For $x_1$ we have
\[
	x_1 = \frac{h_2}{p_2} = \frac{H}{p_2}\sin_{\Omega^\polar}\theta^\polar.
\]

\noindent We will integrate the following equation to compute $x_2$:
\[
	\dot x_2 = x_1u_2 = \frac{H}{p_2}\sin_{\Omega^\polar}\theta^\polar\sin_\Omega\theta.
\]

\noindent To do this, we note that by Theorem~\ref{thm:derivatives} on derivatives of the trigonometric functions we have
\[
	(\cos_{\Omega^\polar}\theta^\polar\sin_{\Omega^\polar}\theta^\polar)'_{\theta^\polar}=
	\cos_{\Omega^\polar}\theta^\polar\cos_\Omega\theta - 
	\sin_{\Omega^\polar}\theta^\polar\sin_\Omega\theta =
	1-2\sin_{\Omega^\polar}\theta^\polar\sin_\Omega\theta.
\]

\noindent Thus from $\dot\theta^\polar=p_2/H$ we have
\[
	x_2 = x_2^0 + \frac{H^2}{2p_2^2} \big(
		\theta^\polar -  \cos_{\Omega^\polar}\theta^\polar\sin_{\Omega^\polar}\theta^\polar
	\big).
\]

Now consider the case $p_2=0$. In this case $\theta^\polar=\theta^\polar_0=\const$ and the point $(h_1,h_2)$ is not moving. If the boundary $\partial\Omega^\polar$ is smooth at the corresponding point, then the angle $\theta_0\leftrightarrow\theta^\polar_0$ is unique, and we obtain the following straight line trajectory:
\[
	x_1 = x_1^0 + t\cos_\Omega\theta_0,\quad x_2 = x_2^0 + t\sin_\Omega\theta_0.
\]

\noindent If the boundary $\partial\Omega^\polar$ has a corner at the corresponding point, then the control $(u_1,u_2)$ may take arbitrary values on the corresponding edge of $\Omega$, and we obtain a singular extremal on this edge. Each point of this edge can be represented in the form
\[
	u_1(v) = \frac{\cos_{\Omega^\polar}\theta^\polar_0 - v\sin_{\Omega^\polar}\theta^\polar_0}{\cos^2_{\Omega^\polar}\theta^\polar_0 + \sin^2_{\Omega^\polar}\theta^\polar_0},
	\quad
	u_2(v) = \frac{\sin_{\Omega^\polar}\theta^\polar_0 + v\cos_{\Omega^\polar}\theta^\polar_0}{\cos^2_{\Omega^\polar}\theta^\polar_0 + \sin^2_{\Omega^\polar}\theta^\polar_0}.
\]

\noindent Here $v\in[a,b]$ is a one-dimensional parameter. So we obtain the following control system that is affine in the one dimensional control $v$:
\[
	\dot x_1 = u_1(v),\quad \dot x_2 = x_1 u_2(v).
\]

\noindent Its reachable set can be found (similarly to the Heisenberg case) by the geometric version of the Ponryagin maximum principle.

\section{The Cartan group}
\label{sec:Cartan}

The left-invariant sub-Riemannian\footnote{I.e., a problem where the set $\Omega$ is a circle centered at the origin.}  problem on the Cartan group has a much reacher dynamics than the problem on the Heisenberg group (see~\cite{SachkovDido}). Sub-Riemannian (normal) geodesics are Euler elasticae and they are determined by the following inversed pendulum equation
\[
	\ddot\theta=\sin\theta.
\]

\noindent Sub-Finsler dynamics on Cartan's group is also much reacher than sub-Finsler dynamics on Heisenberg's group.

From the Hamiltonian point of view the process of integrating of any left-invariant sub-Finsler problem on Cartan's group with an arbitrary set $\Omega$ is reduced to the problem of finding solutions of a Hamitonian system on the corresponding Lie coalgebra. This coalgebra is five-dimensional and has 3 Casimirs; therefore any (smooth) Hamiltonian system on it can be integrated in quadratures.

However, there are two obstacles in this way while integrating equations of Pontryagin's maximum principle. First, the Hamiltonian system of the maximum principle is not smooth and has singular extremals, and, second, we need formulae that are convenient to work with even for the normal case.

So,
\[
	T\to\min,
\]
\[	
	\dot x_1=u_1,\ \dot x_2=u_2,\ \dot z=\frac12(x_1u_2-x_2u_1),\ \dot w_1 = \frac12x_1^2u_2,\ \dot w_2=-\frac12x_2^2u_1,
\]
\[
	u=(u_1,u_2)\in\Omega.
\]

\noindent According to Pontryagin's maximum principle we have
\[
	\mathcal{H}=p_1u_1+p_2u_2 + \frac{r}{2}(x_1u_2-x_2u_1) + \frac{q_1x_1^2}{2}u_2 - \frac{q_2x_2^2}{2}u_1.
\]

\noindent Here $p_1$, $p_2$ are the adjoint variables to $x_1$, $x_2$; $r$ to $z$; and $q_1$, $q_2$ to $w_1$, $w_2$, respectively.

Similarly to the Heisenberg case, the maximum of $\mathcal{H}$ in $u$ is expressed in terms of the support functions of the set $\Omega$:
\[
	H=\max_{u\in\Omega}\mathcal{H} = s_\Omega\Big(p_1-\frac{r}{2}x_2-\frac{q_2}{2}x_2^2,\,p_2+\frac{r}{2}x_1+\frac{q_1}{2}x_1^2\Big).
\]

Let us denote the arguments of $s_\Omega$ by $h_1$, $h_2$ as usual. Using classical notation\footnote{I.e. $h_1$, $h_2$, $h_3$, and $h_4$ are coordinates on the Lie coalgebra.} we get
\[
	\begin{cases}
		\dot h_1=-h_3u_2\\
		\dot h_2=h_3u_1\\
		\dot h_3=h_4u_1+h_5u_2\\
		\dot h_4=\dot h_5=0,
	\end{cases}
\]

\noindent where $h_3 = r+q_1x_1+q_2x_2$, $h_4=q_1$, and $h_5=q_2$. The Casimirs\footnote{On the Lie coalgebra w.r.t. the Lie--Poisson bracket.} are as follows:
\[
	\dot h_4=\dot h_5=\dot C=0,\ \mbox{where}\ C=\frac12h_3^2 + h_5h_1-h_4h_2.
\]

Obviously, $\ddt H=0$, i.e., $H=\const\ge 0$. Consider the case $H=0$. We obtain $h_1=h_2\equiv0$. If $h_3(\tau)\ne 0$ for some $\tau$, then $h_3\ne 0$ in a neighborhood of $\tau$, and thus from the identity $\dot h_1=\dot h_2\equiv 0$ we have $u_1=u_2=0$ in the neighborhood of $\tau$, i.e., $\dot h_3=0$ in the neighborhood of $\tau$. Consequently being continuous the function $h_3(t)$ is constant and thus $u_1=u_2\equiv 0$. The trajectory obtained is not optimal. So necessarily $h_3\equiv 0$. Thereby $(u_1,u_2)\perp(h_4,h_5)$ for all $t$, and we get a singular extremal, which is similar in all left-invariant sub-Finsler problems on the Cartan group independently on the set $\Omega$. Only the velocity of moving along this singular extremal depends on $\Omega$. It remains to say that sub-Riemannian singular extremals on the Cartan group are described in details in~\cite[Section~3.3]{SachkovDido}.

Assume that $H>0$. Since $H=s_\Omega(h_1,h_2)$, we see again that the vector $(h_1,h_2)$ moves along the boundary of the stretched (by $H>0$ times) polar set
\[
	(h_1,h_2)\in H\partial\Omega^\polar.
\]

\noindent The motion along the boundary $H\partial\Omega^\polar$ is not uniform w.r.t. the generalized angle (in contrast to the Heisenberg group). Let us find the dynamics of the generalized angle $\theta^\polar$ used in the following polar change of coordinates:
\[
	h_1=H\cos_{\Omega^\polar}\theta^\polar\quad\mbox{and}\quad h_2=H\sin_{\Omega^\polar}\theta^\polar.
\]

Since $h_1u_1+h_2u_2=H$, using Theorem~\ref{thm:main_trig_thm} we get
\[
	u_1=\cos_\Omega\theta\quad\mbox{and}\quad u_2=\sin_\Omega\theta
\]

\noindent for some $\theta\leftrightarrow\theta^\polar$. Thus the dynamics of the control is determined by the dynamics of~$\theta^\polar$. According to~\eqref{eq:d_theta} we have
\[
	\dot\theta^\polar = \frac{h_1\dot h_2-h_2\dot h_1}{s_\Omega^2(h_1,h_2)} = 
	\frac{H\cos_{\Omega^\polar}\theta^\polar\cos_\Omega\theta + H\sin_{\Omega^\polar}\theta^\polar\sin_\Omega\theta}{H^2}h_3 = \frac{h_3}{H}.
\]

Let us use the addition formula from Proposition~\ref{prop:angle_sum} to compute $\dot h_3$. Denote $q=\sqrt{h_4^2+h_5^2}$, i.e., $h_4=-q\sin\phi_0$ and $h_5=q\cos\phi_0$ for some $\phi_0\in\R$. Then
\[
	\dot h_3 = q(-\sin\phi_0\cos_\Omega\theta + \cos\phi_0\sin_\Omega\theta) =q\sin_{e^{-i\phi_0}\Omega}(\theta-\pi_\Omega^{-1}(\phi_0)).
\]

Let us assume that $\phi_0=0$. If it is not true we can always rotate all the considered objects by the angle $\phi_0$, i.e., redefine $\tilde\theta^\polar=\theta^\polar-\pi_{\Omega^\polar}^{-1}(\phi_0)$, $\theta=\theta-\pi_\Omega^{-1}(\phi_0)$, and $\tilde\Omega=e^{-i\phi_0}\Omega$. So in the case $h_4=0$ and $h_5=q$ we get the following equation:
\begin{equation}
\label{eq:generalized_pendulum_H}
	\ddot \theta^\polar = \frac{q}{H}\sin_\Omega\theta.
\end{equation}

\noindent The trajectories of the maximum principle in the plane $(x_1,x_2)$ obey this equation and generalize Euler's elasticae:
\[
	\dot x_1=\cos_\Omega\theta\quad\mbox{and}\quad\dot x_2=\sin_\Omega\theta.
\]

In the simplest case $q=0$ (i.e. $h_4=h_5=0$), we have $\ddot\theta^\polar=0$. Thus we get the uniform dynamics by Kepler's law similar to the Heisenberg group. In the case $q>0$ we get a fundamentally different equation, which generalizes the equations of the inversed\footnote{The inversed pendulum, of course, turns into a normal one if we choose $\phi_0=\pi$ instead of $\phi_0=0$.} physical pendulum.

In the case of sub-Riemannian problem on the Cartan group the dynamics of the control is determined by the classical pendulum equation, and the Casimir $C$ surprisingly determines the energy of this pendulum. This phenomenon is preserved in the sub-Finsler case. Indeed, denote
\[
	\mathbb{H}\eqdef \frac{C}{H} = \frac12 H\,{{}\dot\theta^\polar}^2 + q\cos_{\Omega^\polar}\theta^\polar.
\]

\noindent Then, obviously, $\dot{\mathbb{H}}=0$. Moreover, if we choose $h_3=H\dot\theta^\polar$ as a adjoint variable to $\theta^\polar$, then we receive the following Hamiltonian equations:
\[
	\dot\theta^\polar = \mathbb{H}'_{h_3} = \frac{h_3}{H}
	\quad\mbox{and}\quad
	\dot h_3 = -\mathbb{H}'_{\theta^\polar} = -(q\cos_{\Omega^\polar}\theta^\polar)'_{\theta^\polar} = q\sin_\Omega\theta.
\]

So integration of the equations of the maximum principle is reduced to solving equation~\eqref{eq:generalized_pendulum_H}. Note that the constant $\frac{q}{H}>0$ is not important in~\eqref{eq:generalized_pendulum_H}, since it can be eliminated by the following stretching of time: $t\mapsto\sqrt{H/q}\,t$. The structure of solutions of equation~\eqref{eq:generalized_pendulum_H} is considered separately in the last section, since the two remaining sub-Finsler problems are also reduced to this equation.

\section{Martinet's problem and the Engel group}
\label{sec:Martine_Engel}

Extremals in the both sub-Finsler problems for the case $H>0$ are described (similarly to the Cartan group) by the generalized pendulum equations~\eqref{eq:generalized_pendulum_H}. In this section we will deduce the motion equations as short as possible, since they are very similar to equations computed for the Cartan group.

Let us start with the Martinet problem
\[
	T\to\min,
\]
\[
	\dot x_1=u_1,\quad \dot x_2=u_2,\quad\dot z=-\frac12x_2^2u_1,
	\qquad
	(u_1,u_2)\in\Omega\subset\R^2.
\]

\noindent The set $\Omega$ is a convex compact set with the origin lying in its interior as always.

Pontryagin's maximum principle gives
\[
	\mathcal{H} = p_1 u_1 + p_2 u_2 - \frac{q}{2}x_2^2u_1=\const\ge 0. 
\]

Let us again describe the maximum of the Pontryagin function in terms of the support function of the set $\Omega$:
\[
	H=\max_{(u_1,u_2)\in\Omega}\mathcal{H} = s_\Omega(p_1 - \frac{q}{2}x_2^2,p_2)=\const.
\]

\noindent The case $H=0$ leads to singular extremals that coincide (up to the motion speed) with singular extremals in the sub-Riemannian case (see~\cite[Chapter~3]{Montgomery}).

Let $H>0$. Denote the arguments of the support function by $h_1=p_1-\frac{q}{2}x_2^2$ and $h_2=p_2$. Obviously, we get a motion along the boundary of the stretched polar set:
\[
	(h_1,h_2)\in H\partial\Omega^\polar.
\]

Using direct substitution we get
\[
	\dot h_1=-h_3u_2,
	\qquad
	\dot h_2=h_3u_1,
\]

\noindent where $h_3=qx_2$ and
\[
	\dot h_3 = qu_2,\quad q=\const.
\]

Denote
\[
	h_1= H\cos_{\Omega^\polar}\theta^\polar
	\quad\mbox{and}\quad
	h_2= H\sin_{\Omega^\polar}\theta^\polar.
\]

\noindent Then using Pontryagin's maximum principle and Theorem~\ref{thm:main_trig_thm} we obtain
\[
	u_1=\cos_\Omega\theta
	\quad\mbox{and}\quad
	u_2=\sin_\Omega\theta
\]

\noindent for some $\theta\leftrightarrow\theta^\polar$.

From~\eqref{eq:d_theta} we have
\[
	\dot\theta^\polar = \frac{h_1\dot h_2-h_2\dot h_1}{s_\Omega^2(h_1,h_2)} = \frac{h_3}{H}.
\]

\noindent It remains to note that $\dot h_3=qu_2=q\sin_\Omega\theta$, and consequently we arrive at the generalized pendulum equation $\ddot\theta^\polar=\frac{q}{H}\sin_\Omega\theta$, which coincides with~\eqref{eq:generalized_pendulum_H} (if $q<0$, then we can rotate the set $\Omega$ by $\pi$ and put $\tilde\Omega=-\Omega$ in similar to \S\ref{sec:Cartan} way).

\medskip

Let us now investigate the Engel group:
\[
	T\to\min,
\]
\[
	\dot x_1=u_1,\quad \dot x_2=u_2,\quad \dot z=\frac12(x_1u_2-x_2u_1),\quad
	\dot w=-\frac12x_2^2u_1,
	\qquad
	(u_1,u_2)\in\Omega.
\]

Using Pontryagin's maximum principle we get
\[
	\mathcal{H}=p_1u_1 + p_2u_2 + \frac{r}{2}(x_1u_2-x_2u_1) - \frac{q}{2}x_2^2u_1.
\]

The maximum of the Pontryagin function has the following form
\[
	H = \max_{(u_1,u_2)\in\Omega}\mathcal{H} = s_\Omega(p_1 - \frac{r}2x_2-\frac{q}{2}x_2^2,p_2+\frac{r}2x_1)=\const\ge 0.
\]

The case $H=0$ leads to singular extremals that coincide with singular extremals in the sub-Riemannian problem on the Engel group. A detailed descriptions of them can be found in~\cite[Section~4]{SachkovArdentovEngel}.

Let $H>0$. For the arguments $h_1=p_1 - \frac{r}2x_2-\frac{q}{2}x_2^2$ and $h_2=p_2+\frac{r}2x_1$ of the support function we have $(h_1,h_2)\in H\partial\Omega^\polar$ and
\[
	\dot h_1 = -h_3u_2,\qquad \dot h_2 = h_3u_1,
\]

\noindent where $h_3=qx_2+r$ and 
\[
	\dot h_3=qu_2,\quad q=\const.
\]

\noindent Thus using a similar substitution
\[
	h_1= H\cos_{\Omega^\polar}\theta^\polar
	\quad\mbox{and}\quad
	h_2= H\sin_{\Omega^\polar}\theta^\polar,
\]
\[
	u_1=\cos_\Omega\theta
	\quad\mbox{and}\quad
	u_2=\sin_\Omega\theta,
\]

\noindent where $\theta\leftrightarrow\theta^\polar$, we get the same generalized pendulum equation $\ddot\theta^\polar=\frac{q}{H}\sin_\Omega\theta$.

\section{Phase portraits of generalized pendulums defined by arbitrary sets \texorpdfstring{$\Omega$}{Ω}.}
\label{sec:pendulum}

In this section we will describe the phase portrait of the following equation
\begin{equation}
\label{eq:pendulum}
	\ddot\theta^\polar=\sin_\Omega\theta
\end{equation}

\noindent for an arbitrary convex compact set $\Omega$ that contains the origin in its interior, $0\in\Int\Omega$. A function $\theta^\polar(t)$ is called a solution of equation~\eqref{eq:pendulum} if it is  Lipschitz continuous together with its derivative and there exists a function $\theta(t)$ such that the correspondence $\theta(t)\leftrightarrow\theta^\polar(t)$ and equation~\eqref{eq:pendulum} hold for a.e.~$t$. In other words, equation~\eqref{eq:pendulum} can be considered as the following differential inclusion:
\[
	\ddot\theta^\polar\in\{\sin_\Omega\theta,\mbox{ for all }\theta\leftrightarrow\theta^\polar\}.
\]

In the present section we will describe the topological structure of the phase portrait of equation~\eqref{eq:pendulum}. It is very similar to structure of the phase portrait of the classical (inversed) pendulum equation $\ddot\theta=\sin\theta$; however, it may have one very important difference in the energy level of separatrix solutions. We will also integrate explicitly the pendulum equation~\eqref{eq:pendulum} in the case when $\Omega$ is a convex polygon.

Equation~\eqref{eq:pendulum} is invariant under the shift of $\theta^\polar$ by the period $2\SQ(\Omega^\polar)$. So it is sufficient to construct the phase portrait on the cylinder $\theta^\polar\in\R/2\SQ(\Omega^\polar)\Z$ and $\dot\theta^\polar\in\R$.

Technically, equation~\eqref{eq:pendulum} may have a non-unique solution, since the right-hand side contains a whole set of possible velocities $\sin_\Omega\theta$ for all $\theta\leftrightarrow\theta^\polar$ (see~\cite{FilippovDRHS}). Nonetheless we will see in what follows that there are no more than two points of non-uniqueness.

We already know that the energy is given by the following formula
\[
	\mathbb{H}=\frac12{{}\dot\theta^\polar}^2 + \cos_{\Omega^\polar}\theta^\polar.
\]

\noindent We claim that the energy is constant along solutions. Indeed, using equation~\eqref{eq:pendulum} and Theorem~\ref{thm:derivatives} we get $\dot{\mathbb{H}}=\dot\theta^\polar\sin_\Omega\theta - \dot\theta^\polar\sin_\Omega\theta_1$ for a.e.~$t$ and for some $\theta_1\leftrightarrow\theta^\polar$. Thus if $\dot\theta^\polar(t_0)=0$, then $\dot{\mathbb{H}}(t_0)=0$, and if $\dot\theta^\polar(t_0)\ne 0$, then the correspondence $\theta\leftrightarrow\theta^\polar$ is one-to-one for a.e.~$t$ in a neighborhood of $t_0$, and hence $\theta(t)=\theta_1(t)$ for a.e.~$t$ in this neighborhood.

From the constancy of $\mathbb{H}$ it immediately follows that if $\dot\theta^\polar\ne0$ for some $t_0$, then the solution is unique in a neighborhood of $t_0$, since it can be found by straightforward integration of a Lipschitz continuous function,
\[
	\int\sqrt2dt = \mathrm{sgn}\,\dot\theta^\polar(t_0)
	\int\frac{d\theta^\polar}{\sqrt{\mathbb{H}-\cos_{\Omega^\polar}\theta^\polar}}.
\]

\noindent Indeed, the function $\mathbb{H}-\cos_{\Omega^\polar}\theta^\polar$ is Lipschitz continuous and separated from $0$ in a neighborhood of $t_0$.

\begin{figure}[ht]
	\centering
	\includegraphics[width=0.5\textwidth]{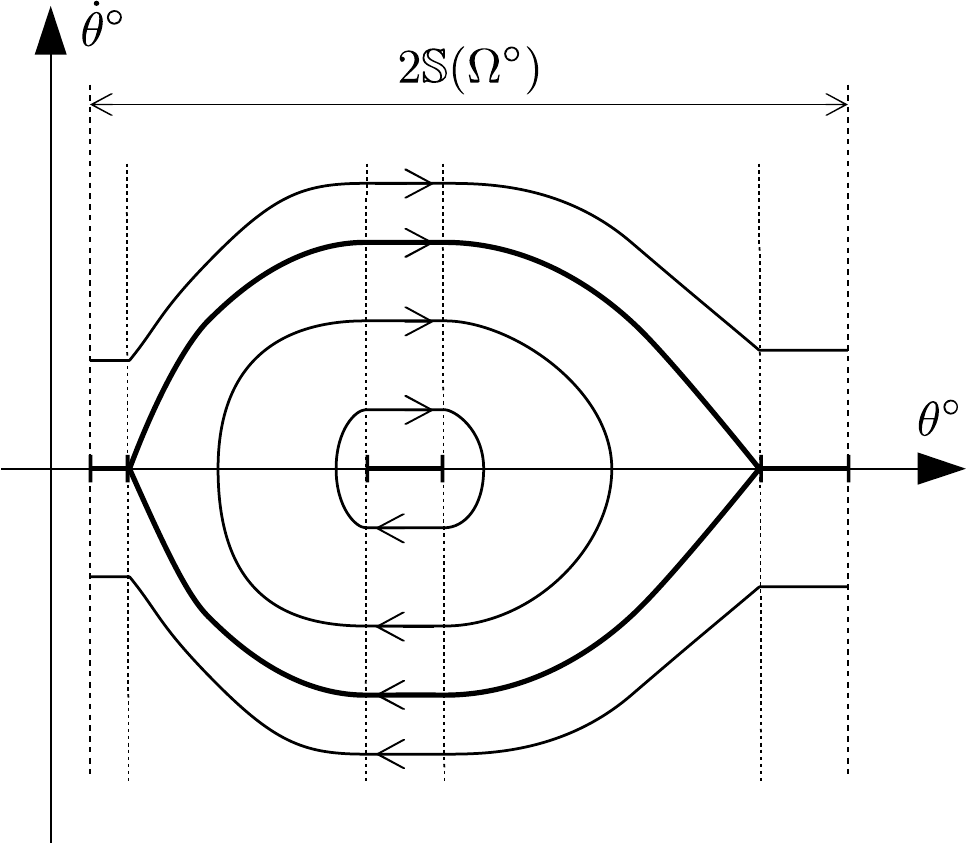}
	\caption{The phase portrait of the generalized pendulum equation~\eqref{eq:pendulum}.}
	\label{fig:phase_portrait}
\end{figure}

Let us describe the topological structure of the phase portrait on the plane  $\R^2=\{(\theta^\polar,\dot\theta^\polar)\}$ (see Fig.~\ref{fig:phase_portrait}). Obviously, the phase portrait is symmetrical w.r.t. to the reflection $\dot\theta^\polar\mapsto-\dot\theta^\polar$, if we simultaneously change the direction of motion, $t\mapsto-t$.

Similarly to the ordinary pendulum, we have
\[
	\cos_{\Omega^\polar}\theta^\polar=\mathbb{H}-\frac12{{}\dot\theta^\polar}^2\le \mathbb{H}.
\]

\noindent Thereby the energy levels of $\mathbb{H}$ are convenient to depict by the vertical lines $p_1=\mathbb{H}$ in the plane $\R^2=\{(p_1,p_2)\}$ containing $\Omega^\polar$. Let us move this line parallel to itself by changing the energy level $\mathbb{H}$ from $-\infty$ to $+\infty$.

If the whole polar set  $\Omega^\polar$ lies to the right of the vertical line $p_1=\mathbb{H}$ for some energy level $\mathbb{H}$, then there are no pendulum trajectories corresponding to this energy level.

Starting from the minimum possible energy level $\mathbb{H}=\mathbb{H}^-$ (when the vertical line $p_1=\mathbb{H}^-$ becomes supporting to the polar set $\Omega^\polar$ for the first time) we will receive either a fixed point, either an interval of fixed points depending on whether the polar set $\Omega^\polar$ has a vertical edge in the left half-plane $p_1<0$ or not.

With increasing $\mathbb{H}$ (until the line $p_1=\mathbb{H}$ becomes supporting to $\Omega^\polar$ for the second time) we will receive a family of periodic solutions. Indeed, there are no fixed points on any trajectory from the family, since a fixed point may appear only on the axis $\dot\theta^\polar=0$, but in this case $\sin_\Omega\theta\ne 0$ for any $\theta\leftrightarrow\theta^\polar$ (as the energy level $\mathbb{H}$ does not reach the extreme value). The period can be found in standard way: if $\theta_{0,1}^\polar$ are solutions of the equation $\cos_{\Omega^\polar}\theta^\polar=\mathbb{H}$ such that the function $\cos_{\Omega^\polar}\theta^\polar$ is less than $\mathbb{H}$ on the interval $(\theta^\polar_0;\theta^\polar_1)$, then
\begin{equation}
\label{eq:pendulum_period}
	\tau=\sqrt{2}\int_{\theta^\polar_0}^{\theta^\polar_1}
	\frac{d\theta^\polar}{\sqrt{\mathbb{H}-\cos_{\Omega^\polar}\theta^\polar}}.
\end{equation}

\noindent This integral is finite, since in a neighborhood of the angles $\theta^\polar_{0,1}$ we have
\[
	\cos_{\Omega^\polar}\theta^\polar = \mathbb{H} - 
	(\theta^\polar-\theta^\polar_{0,1})\sin_\Omega\theta_{0,1} + 
	o(\theta^\polar-\theta^\polar_{0,1}),
\]

\noindent where $\theta_{0,1}=\lim_{\theta^\polar\to\theta_{0,1}^\polar\pm0}\theta$. Moreover, $\sin_\Omega\theta_{0,1}\ne 0$ as the line $p_1=\mathbb{H}$ is not supporting to the polar set $\Omega^\polar$. There is no absence of uniqueness at the points $\dot\theta^\polar=0$ due to the fact that for any $\theta\leftrightarrow\theta^\polar$ the value $\sin_\Omega\theta$ is separated from $0$ in neighborhoods of these points, and consequently the trajectory passes through these points only at isolated time instants.

The vertical line $p_1=\mathbb{H}$ becomes supporting to $\Omega^\polar$ for the second time when the energy reaches some level $\mathbb{H}=\mathbb{H}^+$. We will get two types of solutions for this energy level.

\begin{enumerate}
	\item There is either a ``fixed'' point or an interval of ``fixed'' points depending on whether the polar set $\Omega^\polar$ has a vertical edge in the right half-plane $p_1>0$ or not.
	
	\item There are two separatrix trajectories emanating from one end of the interval of ``fixed'' points of type 1 to the other.
\end{enumerate}

Here we should be very careful with the term ``fixed'', since there may be an absence of uniqueness of solutions. Namely, there may be (depending on the structure of boundary of $\Omega$) a principal topological difference in the separatrix energy level $\mathbb{H}=\mathbb{H}^+$ between the phase portraits of the generalized pendulum and the classical one. Motion along one or both separatrices to the corresponding end-point may take, generally speaking, a finite time. This depends on types of singularities at the end points of integral~\eqref{eq:pendulum_period}. If the motion time is finite, then the uniqueness is lost: we may arrive at the end ``fixed'' point (if this process takes a finite time), stay at this point for some time (possibly zero, finite, or infinite), and then move out to any available separatrix (if this process takes a finite time too).

Hence there is a unique solution passing through any internal point of the right-hand edge of $\partial\Omega$ (if this edge exists). This solution is a trivial constant trajectory, $\dot\theta^\polar=0$. So internal points of the right-hand edge of $\Omega$ are fixed in the classical sense. On the other hand, there may be a lot of solutions passing through the end points of the right vertical edge of $\Omega$ (or through an extreme right-hand point of $\Omega$ if there is no edge), but a trivial constant solution also exists. Thus this is not quite right to call these points ``fixed'' in the classical sense.

Note that these motions of the generalized pendulum in the separatrix energy level generate very remarkable extremals in Martinet's problem, and the Engel and Cartan groups. We can move along a singular extremal (of the first order) on some edge, then move to a non-singular one at any instant. This non-singular extremal will come back to the singular one in finite time, and then everything will repeat for countable many times. The greatest interest here, from the author's point of view, is the question on the optimality of such piecewise singular trajectories.

It remains to consider the energy levels with $\mathbb{H}>\mathbb{H}^+$. The polar set $\Omega^\polar$ lies to the left of the line $p_1=\mathbb{H}$. Thus we receive a periodic solution (if we assume that $\theta^\polar\in\R/2\SQ(\Omega^\polar)\Z$) with the following period:
\[
	\tau=\frac{1}{\sqrt{2}}\int_0^{2\SQ(\Omega^\polar)}
	\frac{d\theta^\polar}{\sqrt{\mathbb{H}-\cos_{\Omega^\polar}\theta^\polar}}.
\]

\noindent The motion speed is always separated from $0$, since $|\dot\theta^\polar|\ge \sqrt2\sqrt{\mathbb{H}-\mathbb{H}^+}$.

Now let us integrate explicitly the pendulum equation~\eqref{eq:pendulum} for the case when $\Omega$ is a polygon. In this case solutions of equation~\eqref{eq:pendulum} is composed of solutions corresponding to edges of $\Omega^\polar$. These solutions are easy to find. Indeed, using notations of Section~\ref{sec:polyhedron}, the function $\sin_\Omega\theta$ takes the following constant value $\sin_\Omega\Theta_{k+1}$ for all $\theta^\polar\in(\Theta^\polar_k,\Theta^\polar_{k+1})$. So the integral curves are parabolas with the horizontal axis (or horizontal lines if $\sin_\Omega\Theta_{k+1}=0$) of the following form
\[
	\frac12{{}\dot\theta^\polar}^2=\theta^\polar\sin_\Omega\Theta_{k+1} + \const
	\quad\mbox{with}\quad \theta\in(\Theta^\polar_k,\Theta^\polar_{k+1}).
\]

In the polygon case the points of non-uniqueness on separatrices (on the level $\mathbb{H}=\mathbb{H}^+$) always exist in the phase portrait, since integral~\eqref{eq:pendulum_period} with $\mathbb{H}=\mathbb{H}^+$ has end singularities equivalent to $\int_0^\varepsilon t^{-\frac12}\,dt$, $\varepsilon>0$, and thus it should be finite.

Let us say a few words about the optimal control determined by the equations $u_1=\cos_\Omega\theta$ and $u_2=\sin_\Omega\theta$ for $\theta\leftrightarrow\theta^\polar$.

\begin{thm}
\label{thm:pendumul_is_pieswise_const}
	Let $\theta^\polar(t)$ be a solution of equation \eqref{eq:pendulum} and $\theta^\polar(t)\leftrightarrow\theta(t)$ for a.e.~$t$. Suppose that $\Omega$ is a convex polygon and $0\in\Int\Omega$. Then the function $\theta(t)$ is a.e.~piecewise-constant up to the period $2\SQ(\Omega)$.
\end{thm}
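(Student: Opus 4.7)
My plan is to exploit the fact that in the polygon case both the correspondence $\theta\leftrightarrow\theta^\polar$ and the right-hand side $\sin_\Omega\theta$ become piecewise constant, reducing the pendulum equation to a sequence of constant-acceleration motions. I will split the time axis into (i) open intervals where $\theta^\polar(t)$ lies strictly inside one ``strip'' $(\Theta^\polar_k,\Theta^\polar_{k+1})$ and (ii) their complement $S$, on which $\theta^\polar$ takes vertex values. On each interval of type (i) the correspondence $\theta^\polar(t)\leftrightarrow\theta(t)$ forces the unique value $\theta(t)=\Theta_{k+1}$, so constancy of $\theta$ on such an interval is automatic; the entire work concentrates on showing that $S$ has a tame structure (locally finite union of isolated points and closed intervals) and that a piecewise constant selection of $\theta(t)$ is possible on $S$.

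On each open component of $\R\setminus S$, as already recorded in Section~\ref{sec:polyhedron} and restated just before the theorem, the equation reduces to $\ddot\theta^\polar=\sin_\Omega\Theta_{k+1}$, a genuine constant; hence $\theta^\polar(t)$ is a quadratic (or linear) function of $t$ on that component. A non-constant quadratic reaches any prescribed value in finite time, so each component of $\R\setminus S$ has finite length, and inside any bounded time window there can be only finitely many strip-crossings (assuming no separatrix accumulation, dealt with in the final paragraph).

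To handle $S$ itself, suppose $[t_1,t_2]\subset S$ is a non-degenerate closed sub-interval on which $\theta^\polar(t)\equiv\Theta^\polar_k$. Then $\dot\theta^\polar\equiv\ddot\theta^\polar\equiv 0$ there, and the differential inclusion forces $\sin_\Omega\theta(t)=0$ for a.e.\ $t\in[t_1,t_2]$ with $\theta(t)\in[\Theta_k,\Theta_{k+1}]$. The edge $P_kP_{k+1}$ of $\Omega$ meets the $x$-axis in at most two points, so $\sin_\Omega\theta$ has at most two zeros on this edge and I can select $\theta(t)$ to be constant (or at worst piecewise constant with finitely many pieces) on $[t_1,t_2]$. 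The remaining points of $S$ are transversal strip-crossings ($\dot\theta^\polar\ne 0$ at $\theta^\polar=\Theta^\polar_k$) and form a locally discrete, hence measure-zero, set on which the value of $\theta$ does not affect the piecewise-constant property.

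The main obstacle is ruling out an accumulation of strip-crossing times, which would produce a ``singular'' Cantor-like set where the piecewise-constant structure breaks down. Such an accumulation could only arise from an asymptotic approach of $\theta^\polar(t)$ to a vertex angle $\Theta^\polar_k$ along a separatrix energy level $\mathbb{H}=\mathbb{H}^+$. In the polygon case, however, as noted earlier in the section, the integral~\eqref{eq:pendulum_period} at $\mathbb{H}=\mathbb{H}^+$ is convergent because its end singularities are of the integrable type $\int_0^\varepsilon t^{-1/2}\,dt$; equivalently, one sees it directly from the piecewise-quadratic form of $\theta^\polar(t)$ since a quadratic never asymptotes. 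This finite-approach-time property guarantees that the above partition of $\R$ is locally finite, and hence that $\theta(t)$ is a.e.\ piecewise constant modulo the period $2\SQ(\Omega)$, as claimed.
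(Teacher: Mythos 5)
Your skeleton is essentially the paper's mechanism: while $\theta^\polar(t)$ runs through the interior of a strip $(\Theta^\polar_k,\Theta^\polar_{k+1})$ (an open edge of $\Omega^\polar$) the correspondence pins $\theta$ at the vertex value $\Theta_{k+1}$ modulo $2\SQ(\Omega)$, and on a nondegenerate dwell interval at a vertex value $\Theta^\polar_k$ the relation $\ddot\theta^\polar=0$ forces $\sin_\Omega\theta=0$ with $\theta$ in the range of the edge $P_kP_{k+1}$, which pins $\theta$ a.e.\ at the intersection of that edge with the $x$-axis (since $0\in\Int\Omega$ this intersection is at most \emph{one} point, not two, so no ``selection'' is needed). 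The paper organizes the same observations by energy level ($\mathbb{H}=\mathbb{H}^-$, generic $\mathbb{H}$, and the separatrix level $\mathbb{H}=\mathbb{H}^+$) instead of by your time decomposition; also note that a component of $\R\setminus S$ may have infinite length (a rest point in the interior of a vertical edge of $\Omega^\polar$), which contradicts your ``finite length'' claim but is harmless since $\theta$ is constant there.

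The genuine weak point is your last paragraph, which carries the whole weight of ``piecewise''. The assertion that an accumulation of crossing times ``could only arise from an asymptotic approach to a vertex angle along the separatrix level'' is not justified, and the inference drawn from convergence of the integral \eqref{eq:pendulum_period} is backwards: an \emph{infinite} approach time would trivially exclude accumulation, whereas the \emph{finite} approach time in the polygon case is exactly what makes the separatrix level delicate (the solution can reach an extreme vertex of $\Omega^\polar$, rest, and leave again), so ``finite approach time guarantees local finiteness'' proves nothing. What actually excludes Zeno-type accumulation is energy conservation, which you never invoke: $\mathbb{H}=\tfrac12(\dot\theta^\polar)^2+\cos_{\Omega^\polar}\theta^\polar$ is constant along the solution, so the speed at a given vertex angle $\Theta^\polar_k$ equals the fixed number $v_k=\sqrt{2(\mathbb{H}-\cos_{\Omega^\polar}\Theta^\polar_k)}$. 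If $v_k>0$, an excursion from $\Theta^\polar_k$ into an adjacent strip that turns back lasts exactly $2v_k/|a|$ with $a\in\{\sin_\Omega\Theta_k,\sin_\Omega\Theta_{k+1}\}$, hence at least $2v_k/\max_j|\sin_\Omega\Theta_j|>0$, while a full traversal of a strip lasts at least its width divided by $\sqrt{2(\mathbb{H}-\mathbb{H}^-)}$; if $v_k=0$, the solution arrives with zero velocity and, the acceleration being constant in each adjacent strip, it either stays or departs monotonically across the whole strip (it cannot oscillate about $\Theta^\polar_k$), and a full separatrix round takes a fixed positive time. These uniform lower bounds on the durations of the pieces are what make your partition locally finite; as written, that step is asserted rather than proved.
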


\begin{proof}
	Let the solution $\theta^\polar(t)$ belong to an energy level $\mathbb{H}=\const$. Obviously, $\mathbb{H}\ge\mathbb{H}^-$. First, consider the general case $\mathbb{H}>\mathbb{H}^-$ and $\mathbb{H}\ne\mathbb{H}^+$. In this case any solution $\theta^\polar(t)$ of equation~\eqref{eq:pendulum} passes through vertices of $\Omega^\polar$ only at isolated time instants. We claim that the function $\theta(t)$ is a.e.\ piecewise-constant (up to the period $2\SQ(\Omega)$). Indeed, while the point $(\cos_{\Omega^\polar}\theta^\polar,\sin_{\Omega^\polar}\theta^\polar)$ moves along an edge $Q_{k-1}Q_k$ of $\Omega^\polar$, the angle $\theta$ is determined by the point $P_\theta=(\cos_\Omega\theta,\sin_\Omega\theta)\in\partial\Omega$, which stays at the vertex $P_k$ of $\Omega$, and hence $\theta=\Theta_k+2\SQ(\Omega)\mathbb{Z}$ for a.e.~$t$.

	Consider the case $\mathbb{H}=\mathbb{H}^-$. In this case $\dot\theta^\polar\equiv0$. Consequently $\sin_\Omega\theta=0$ and $\cos_\Omega\theta<0$ for a.e.~$t$. Thus the angle $\theta$ is determined uniquely by the intersection point of the ray $\{x\le 0,y=0\}$ with the boundary $\partial\Omega$.
	
	The separatrix case $\mathbb{H}=\mathbb{H}^+$ is the most interesting one. Here we have two different types of solutions. The first type is the easiest one and it appears when the polar set $\Omega^\polar$ has a vertical edge in the right half-plane. In this case we have a family of constant solutions $\theta^\polar=\const$, which stay at arbitrary interior points of the right edge. Thus $\dot\theta^\polar=\sin_\Omega\theta=0$ and $\cos_\Omega\theta>0$. So the function $\theta$ is a.e.\ constant (up to the period) and it is determined by the intersection point of the ray $\{x\ge 0,y=0\}$ with the boundary $\partial\Omega$. The second type is more difficult: $\theta^\polar(t)$ alternates a separatrix round (which takes a fixed finite time) and a constant solution that stays at some extreme right vertex of $\Omega^\polar$ (for an arbitrary time interval). During the separatrix round the solution $\theta^\polar(t)$ passes through vertices of $\Omega^\polar$ only at isolated time instants, and $\theta(t)$ is a.e.\ piecewise-constant (up to the period). And finally, if $\theta^\polar(t)$ stays at some right vertex for $t\in[t_1;t_2]$, then we again have $\sin_\Omega\theta=0$ and $\cos_\Omega\theta>0$, and thus the angle $\theta$ is determined uniquely by the intersection point of the ray $\{x\ge 0,y=0\}$ with the boundary $\partial\Omega$.
\end{proof}

\begin{corollary}
	Let $\Omega$ be a convex polygon and $0\in\Int\Omega$. Consider the sub-Finsler problems on the Cartan an Engel groups and Martinet's case (see \S\ref{sec:Cartan} and \S\ref{sec:Martine_Engel}). Suppose $H\ne 0$. If $q\ne0$ or $h_3\ne 0$, then the optimal control is a.e.~piecewise-constant, and extremals are piecewise-polynomial of degree 3 or less. If $q=h_3=0$, then the optimal control belongs to a fixed edge of $\Omega$.
\end{corollary}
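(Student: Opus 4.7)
The plan is to reduce the statement to Theorem~\ref{thm:pendumul_is_pieswise_const} via the explicit parametrisation of the Pontryagin dynamics derived in \S\ref{sec:Cartan} and \S\ref{sec:Martine_Engel}. Recall that in each of the three problems (for $H>0$) the control takes the form $u_1=\cos_\Omega\theta$, $u_2=\sin_\Omega\theta$ with $\theta\leftrightarrow\theta^\polar$, and the generalised angle obeys $\dot\theta^\polar=h_3/H$ together with $\dot h_3=q\sin_\Omega\theta$. So the entire question about the control reduces to describing $\theta(t)$, and I would split the analysis according to whether $q$ and $h_3$ vanish.

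\emph{Case $q\neq 0$.} After the rotation of $\Omega$ that sends $(h_4,h_5)$ to the vertical axis (carried out in \S\ref{sec:Cartan}--\ref{sec:Martine_Engel}) and the time rescaling $t\mapsto\sqrt{H/|q|}\,t$, the function $\theta^\polar$ satisfies exactly \eqref{eq:pendulum} (with $\Omega$ replaced by $-\Omega$ if $q<0$, which is still a polygon containing the origin in its interior). Theorem~\ref{thm:pendumul_is_pieswise_const} then yields that $\theta(t)$ is a.e.\ piecewise-constant modulo $2\SQ(\Omega)$.

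\emph{Case $q=0$, $h_3\neq 0$.} Here $\dot h_3=0$, so $h_3$ is a nonzero constant and $\dot\theta^\polar=h_3/H$ is a nonzero constant too. Thus $\theta^\polar(t)$ is affine and strictly monotone, and meets any fixed vertex of $\Omega^\polar$ only at isolated time instants. Between those instants $Q_{\theta^\polar(t)}$ traverses the interior of an edge of $\Omega^\polar$, which under the correspondence $\leftrightarrow$ forces $\theta$ to sit at the dual vertex of $\Omega$. Hence $\theta(t)$ is again a.e.\ piecewise-constant.

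\emph{Case $q=h_3=0$.} Now $\dot\theta^\polar=0$, so $\theta^\polar\equiv\mathrm{const}$ and $(h_1,h_2)$ is frozen on $H\partial\Omega^\polar$. If the frozen point is interior to an edge of $\Omega^\polar$, then $\theta\leftrightarrow\theta^\polar$ picks out a unique vertex of $\Omega$ and the control is constant at that vertex (which lies on a fixed edge of $\Omega$); if it is a vertex of $\Omega^\polar$, then the dual set under $\leftrightarrow$ is a whole edge of $\Omega$ on which the control may vary. Either way the control belongs to a fixed edge of $\Omega$, which is exactly the second assertion. Finally, assuming $u_1,u_2$ a.e.\ piecewise-constant, a direct integration of the defining ODEs in each problem gives $x_1,x_2$ piecewise-linear, $z$ (from $\dot z=\tfrac12(x_1u_2-x_2u_1)$) piecewise-quadratic, and $w,w_1,w_2$ (from $\dot w=-\tfrac12x_2^2u_1$, $\dot w_1=\tfrac12x_1^2u_2$, $\dot w_2=-\tfrac12x_2^2u_1$) piecewise-polynomial of degree at most three, on each interval where $\theta$ is constant.

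The only genuinely delicate point is case~I: one must check that the rotation reducing the problem to equation~\eqref{eq:generalized_pendulum_H} preserves the polygon hypothesis (it does, since rotations carry polygons containing $0$ in their interior to the same), and that the separatrix subtleties described after Theorem~\ref{thm:pendumul_is_pieswise_const} in \S\ref{sec:pendulum} still produce a.e.\ piecewise-constant $\theta(t)$; everything else is book-keeping of polynomial degrees.
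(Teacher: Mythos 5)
Your proposal is correct and follows essentially the same route as the paper: reduce the $q\neq 0$ case (after the rotation and time rescaling of \S\ref{sec:Cartan}--\S\ref{sec:Martine_Engel}) to equation~\eqref{eq:pendulum} and invoke Theorem~\ref{thm:pendumul_is_pieswise_const}, then handle $q=0$ via the uniform motion $\dot\theta^\polar=h_3/H$ as in the Heisenberg analysis, and finish by degree counting in the state equations. The only difference is cosmetic: where the paper simply refers the case $q=0$ back to \S\ref{sec:Heisenberg}, you spell out the sub-cases $h_3\neq 0$ and $h_3=0$ explicitly, which is a faithful expansion of the same argument.
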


\begin{proof}
	We show in \S\ref{sec:Cartan} and \S\ref{sec:Martine_Engel} that if $H\ne0$, then the optimal control $u=(u_1,u_2)$ has the from $u=(\cos_\Omega\theta,\sin_\Omega\theta)$ where $\theta\leftrightarrow\theta^\polar$ and $\theta^\polar$ is a solution of equation \eqref{eq:generalized_pendulum_H} (up to a rotation of $\Omega$). Without loss of generality assume that $q\ge 0$. If $q\ne 0$, then $\theta^\polar$ is a solution of equation \eqref{eq:pendulum} up to stretching of time. Consequently, if $q\ne 0$, then the optimal control is piecewise-constant by Theorem~\ref{thm:pendumul_is_pieswise_const}. Hence we trivially obtain that extremals are piecewise-polynomial of degree 3 or less. If $q=0$, then we have equations similar to Heisenberg's case, which was considered in details in \S\ref{sec:Heisenberg}.
\end{proof}

Note that if $\Omega$ is a polygon, then equations from the maximum principle on the Cartan and Engel groups and in Martinet's case can be integrated explicitly (except for the case $q=h_3=0$, which leads to an additional problem with one-dimensional control), extremals are piecewise-polynomial, and any optimal control is piecewise-constant. In the case $\mathbb{H}\ne\mathbb{H}^\pm$ any optimal control is periodic and it takes values only at vertices of~$\Omega$. There is a singular control in each case $\mathbb{H}=\mathbb{H}^\pm$, and this control can be distinct from the vertices of $\Omega$. In the case $\mathbb{H}=\mathbb{H}^-$ the optimal control is singular and constant. In the case $\mathbb{H}=\mathbb{H}^+$ the optimal control is piecewise-constant, it alternates singular and non-singular types, and it can be non-periodic.

\begin{figure}[ht]
	\centering
	\begin{subfigure}[b]{0.35\textwidth}
		\includegraphics[width=\textwidth]{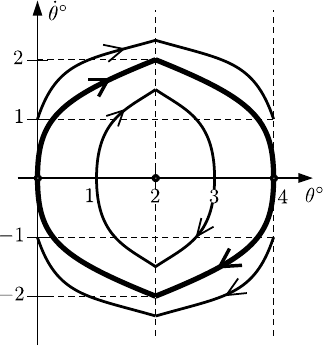}
	\end{subfigure}
	\ \ \ 
	\begin{subfigure}[b]{0.35\textwidth}
		\includegraphics[width=\textwidth]{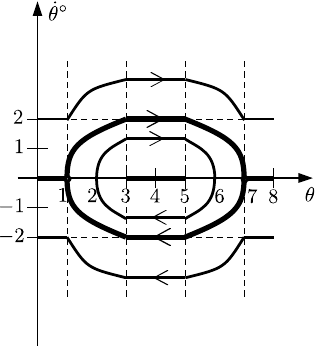}
	\end{subfigure}
	\caption{Phase portraits of the equation $\ddot\theta^\polar=\sin_\Omega\theta$ for the following sets: $\Omega^\polar=\{|p|+|q|\le 1\}$ (on the left) and $\Omega^\polar=\{|p|\le 1,|q|\le 1\}$ (on the right).}
	\label{fig:phase_pendulum_squares}
\end{figure}

\begin{example}
	Let $\Omega$ be one of the two squares  $\{|p|\le 1,|q|\le 1\}$ or $\{|p|+|q|\le 1\}$. Then the function $\sin_\Omega\theta$ takes only values $0$ and $\pm 1$ while the point $(\cos_{\Omega^\polar}\theta^\polar,\sin_{\Omega^\polar}\theta^\polar)$ moves along an edge of $\Omega$. Consequently parabolas in phase portraits of equation~\eqref{eq:pendulum} for these two sets are given by the following formulae:

	\[
		\theta^\polar = \pm\frac12{{}\dot\theta^\polar}^2 + \const.
	\]

	\noindent Phase portraits are schematically depicted in Fig.~\ref{fig:phase_pendulum_squares}.
\end{example}

The author expresses deep gratitude to Yu.L.~Sachkov and A.A.~Ardentov for valuable discussions and many interesting considerations.

\printbibliography[heading=bibintoc]


\end{document}